\newcommand{\beqnum}{\begin{equation}\begin{array}{lcl}}
\newcommand{\eeqnum}{\end{array}\end{equation}}
\newcommand{\beqnom}{\begin{eqnarray}}
\newcommand{\eeqnom}{\end{eqnarray}}
\newcommand{\beqnc}{\begin{center}\begin{eqnarray}}
\newcommand{\eeqnc}{\end{eqnarray}\end{center}}
\newcommand{\beqnlm}{\begin{equation}\vspace{-.5cm}\begin{array}{lll}}
\newcommand{\eeqnlm}{\end{array}\end{equation}}\vspace{-.5cm}
\newcommand{\beq}{\begin{eqnarray*}}
\newcommand{\eeq}{\end{eqnarray*}}
\newcommand{\bef}{\begin{figure}[tbh!]}
\newcommand{\enf}{\end{figure}}
\newtheorem{montheo}{\bf Theorem}
\newtheorem{rem}{\bf Remark}
\newtheorem{lemme}{\bf Lemma}
\newtheorem{Assumption}{\bf Assumption}
\title{Global tracking for an underactuated ships with bounded feedback controllers}
\author{Salah Laghrouche$^{*}$, Mohamed Harmouche$^{*}$,  and Yacine Chitour$^{**}$\\
$^{*}$SET Laboratory, UTBM, Belfort, France.\\
$^{**}$L2S, Universite Paris XI, CNRS 91192 Gif-sur-Yvette, France.\\
\small{salah.laghrouche@utbm.fr, mohamed.harmouche@utbm.fr, yacine.chitour@lss.supelec.fr }\\
}
\date{}
\begin{document}
\maketitle
\begin{abstract}
\noindent In this paper, we present a global state feedback tracking controller for underactuated surface marine vessels. This controller is based on saturated control inputs and, under an assumption on the reference trajectory, the closed-loop system is  globally asymptotically stable (GAS). It has been designed using a 3 Degree of Freedom benchmark vessel model used in marine engineering. The main feature of our controller is the boundedness of the control inputs, which is an essential consideration in real life. In absence of velocity measurements, the controller works and remains stable with observers and can be used as an output feedback controller. Simulation results demonstrate the effectiveness of this method.
\end{abstract}

\begin{keywords}
Global tracking, bounded feedback, Lyapunov function, underactuated surface marine vessels.  
\end{keywords}

\everymath{\displaystyle}

\section{Introduction}
Precise tracking control of surface marine vessels (ships and boats) is often required in critical operations such as support around off-shore oil rigs \cite{PE}. This problem is of particular interest as marine vessels are often underactuated, i.e. the number of independent actuators is less than the degrees of freedom (DOF) to be controlled. In this paper, we consider the problem of tracking control of a 3-DOF vessel model (surge, sway and yaw \cite{Fossen_book}), working under two independent actuators capable of generating surge force and yaw moment only. It has been shown in \cite{B2,CR,Z} that under Brockett's necessary condition \cite{B2}, stabilization of this system is impossible with continuous or discontinuous time-invariant state feedback. This can be seen in \cite{G} where the authors developed a continuous time-invariant controller that achieved global exponential position tracking but the vessel orientation could not be controlled. In addition, it is shown in \cite{R} that the underactuated ship can not be transformed into a driftless chained system; which means that the control techniques used for the similar problem of nonholonomic mobile robot control cannot be applied directly to the underactuated ship control. Accordingly, control of underactuated vessels in this configuration has been studied rigorously by contemporary researchers, examples of which are \cite{PN,FGBL,BOF,S1,B1}.

In \cite{R}, the author showed that under discontinuous time-varying feedback, the underactuated vessel is strongly accessible and small-time locally controllable at any equilibrium. A discontinuous time-invariant controller was proposed which showed exponential convergence of the vessel towards a desired equilibrium point, under certain hypotheses imposed on the initial conditions. In \cite{PE}, a continuous periodic time-varying feedback controller was presented that locally exponentially stabilizes the system on the desired equilibrium point by using a global coordinate transformation to render the vessel's model homogenous. In \cite{PN}, a combined integrator backstepping and averaging approach was used for tracking control, together with the continuous time-varying feedback controller for position and orientation control. This combined approach, later on used in \cite{Pettersen_IJC}, provides practical global exponential stability as the vessel converges to a neighborhoo
 d of the desired location or trajectory, the size of which can be chosen arbitrarily small. Jiang \cite{Jiang_2002} used Lyapunov's direct method for global tracking under the assumption that the reference yaw velocity requires persistent excitation condition; therefore implying that a straight line trajectory could not be tracked. This drawback was overcome in \cite{Do2002} and \cite{Behal_TAC}. Do et al. \cite{Do2002} proposed a Lyapunov based method and backstepping technique for stabilization and tracking of underactuated vessel.  In this work, conditions were imposed on the trajectory to transform the tracking problem into dynamic positioning, circular path tracking, straight line tracking and parking. Borhaug et al. have proposed a control scheme for straight line following of a formation of marine vessels in \cite{Panteley2011}. 

In this paper, we address the global tracking control of underactuated vehicles, using saturated state feedback control \cite{Chitour1995,Liu1996,Yakoubi2006,Yakoubi2007,Yakoubi2007-1,Yakoubi2007-2}. Our work addresses the remaining case not treated in
\cite{Do2002}, i.e., the yaw angle of the tracked trajectory does not admit a limit at time goes to infinity. This research is therefore in the same direction as in \cite{Chwa_2011}, where the author achieved practical stability. Our algorithm provides asymptotic convergence to the tracked trajectory from any initial point. The advantage of using saturated controls is that the global asymptotic stability is ensured while the control inputs remain bounded, as real life actuators are all limited in output, see for instance \cite{GMM2011},  \cite{GMM2012}. The proposed controller has been proven to work with state measurements, as well as with observers in the case where all states may not be measured (cf. also \cite{GMM2011}).

The paper is organized as follows: the vessel model is presented in Section 2 and the control problem is formulated in Section 3. In Section 4, the controller is developed and the proof of stability is given. In Section 5, the stability of the controller is shown in presence of observation errors. Simulations are given in Section 6 and concluding remarks are presented in Section 7.
\section{Vessel Model}
In this section, we discuss the physical model of the marine vessel and the related assumptions on physical phenomena associated with its motion. Then, a mathematical reformulation is presented, following variable and time-scale changes, to obtain a suitable form for control design.
\subsection{Physical Model}
\noindent The general 6-DOF rigid body model for surface marine vessels presented in \cite{Fossen_book} can be reduced by considering surge, sway and yaw motions only, under the following assumptions \cite{Chwa_2011},

\begin{description}
\item[$(H1)$] Heave, roll and pitch motions induced by drift forces of wind, wave and ocean current are neglected.
\item[$(H2)$] The inertia, added mass and hydrodynamic damping matrices are diagonal. 
\end{description}

\noindent The aft propeller configuration provides only the surge force $\tau_u$ and the yaw moment $\tau_r$. The kinematic and dynamic equations of the vessel can therefore be written as \cite{Do2002,Chwa_2011}


\beqnum \label{system}
\left\{ \begin{array}{lll}
\dot x &=& u \cos \left( \psi  \right) - v\sin \left( \psi \right),\quad
\dot y = u \sin \left( \psi  \right) + v\cos \left( \psi \right),\\
\dot u &=& \frac{1}{c}vr - au + \bar \tau_1 ,\quad
\dot v =  - cur - bv ,\\
\dot \psi  &=& r ,\quad \dot r = \kappa uv - dr + {\bar \tau _2},
\end{array} \right.
\eeqnum
where $(x,y)$ and $\psi$ are the coordinates and the yaw angle of the vessel in the earth-fixed frame, and $u$, $v$ and $r$ denote the surge, sway and yaw velocities respectively. The control inputs $\bar \tau_1$ and $\bar \tau_2$ are the normalized expressions of the surge force and yaw moment, given as
	$\bar \tau_1 = \frac{1}{m_1}{\tau _u}$ and ${\bar\tau _2} = \frac{1}{m_3}{\tau_r}$.
The parameters $a$, $b$, $c$, $d$ and $\kappa$ are positive constants that represent the mechanical properties of the system, namely the inertia $m_i > 0$ and  hydrodynamic damping $d_i$, where $i=1,\ 2,\ 3$ corresponds to surge, sway and yaw motions respectively. The constants are defined as follows $a = \frac{d_1}{m_1}$, $b = \frac{d_2}{m_2}$, $c = \frac{m_1}{m_2}$, $ d = \frac{d_3}{m_3}$, $\kappa = \frac{m_1 - m_2}{m_3}$.


\subsection{Model for control}
For control design, the system model \eqref{system} can be simplified by normalizing the physical parameters through straightforward variable and time-scale changes. For the sake of clarity, let us rewrite System \eqref{system} as follows,
\beqnum \label{system1}
(\bar S)\left\{ \begin{array}{cll}
{\left( \begin{array}{l}
		\dot x\\
		\dot y
\end{array} \right) } &=& {R_\psi }\left( \begin{array}{l}
		u\\
		v
\end{array} \right),\\
{\left( \begin{array}{l}
		\dot u\\
		\dot v
\end{array} \right)} &=&  - D_0 \left( \begin{array}{l}
		u\\
		v
\end{array} \right) - r A_c \left( \begin{array}{l}
		u\\
		v
\end{array} \right) + \left( \begin{array}{l}
		1\\
		0
\end{array} \right){\bar \tau_1},\\
\dot \psi  &=& r ,\quad \dot r = \kappa uv - dr + \bar \tau _2 ,
\end{array} \right.
\eeqnum
where the matrices $D_0$,  $R_\psi$ and $A_c$ are given as
\beqnum
D_0 = \left( {\begin{array}{*{20}{c}}
		a & 0\\
		0 & b
\end{array}} \right),
\ {R_\psi } = \left( {\begin{array}{*{20}{c}}
		{\cos \left( \psi  \right)} & { - \sin \left( \psi  \right)}\\
		{\sin \left( \psi  \right)} & {\cos \left( \psi  \right)}
\end{array}} \right),\
A_c = \left( \begin{array}{cc} 0 & {-1/c}\\ c & 0  \end{array}  \right).
\eeqnum
Let us consider the following matrix $D_{\rho}=diag(\rho,c\rho)$, 
where $\rho$ is a positive constant to be chosen later. Then we obtain
$A_1 = D_{\rho}^{-1} A_c D_{\rho}$. The time scale $s:=dt$ is introduced in System \eqref{system1} as well as the linear changes of variables $(u(t)/d\rho,v(t)/dc\rho)$ and $r(t)/d$, still denoted $(u(s),v(s)$ and $r(s)$ respectively.


After easy computations and by setting $\beta: = \frac{\kappa}{c\rho^2}$, $\tau_1: = \frac{\bar \tau_1}{ \rho d^2}$, $\tau_2 := \frac{\bar \tau_2}{ d^2}$ and $D=diag(a/d,b/d)$,
the dynamics of the vessel, denoted by  $(S)$, is rewritten as follows,
\beqnum\label{system2_timescale}
(S)\left\{ \begin{array}{cll}
{\left( \begin{array}{l}
		\dot x\\
		\dot y
\end{array} \right) } &=& {R_\psi} D_\rho \left( \begin{array}{l}
		u\\
		v
\end{array} \right),\\
{\left( \begin{array}{l}
		\dot u\\
		\dot v
\end{array} \right) } &=&  - D \left( \begin{array}{l}
		u\\
		v
\end{array} \right) - r A_1 \left( \begin{array}{l}
		u\\
		v
\end{array} \right) + {\tau_1} \left( \begin{array}{l}
		1\\
		0
\end{array} \right) ,\\
\dot \psi  &=& r,\quad
\dot r = \beta uv - r + \tau_2.
\end{array} \right.
\eeqnum

\section{Problem Formulation}
The goal of this paper is tracking control of the presented underactuated marine vessel by controlling its position and orientation. The vessel is forced to follow a reference trajectory which is generated by a ``virtual vessel'', as follows \cite{Lapierre2003,Burger2010},
\beqnum \label{reference}
(S_{re}) \left\{ \begin{array}{cll}
{\left( \begin{array}{l}
	\dot x_{re}\\
	\dot y_{re}
\end{array} \right) } &=& R_{\psi_{re}} D_\rho \left( \begin{array}{l}
	u_{re}\\
	v_{re}
\end{array} \right),\\
{\left( \begin{array}{l}
	\dot u_{re}\\
	\dot v_{re}
\end{array} \right) } &=&  - {D}\left( \begin{array}{l}
	u_{re}\\
	v_{re}
\end{array} \right) - r_{re} A_1 \left( \begin{array}{l}
	u_{re}\\
	v_{re}
\end{array} \right) + \left( \begin{array}{l}
	1\\
	0
\end{array} \right) \tau_{1,re},\\
\dot \psi_{re} &=& r_{re},\quad
\dot r_{re} = \beta u_{re} v_{re} - r_{re} + \tau_{2,re},
\end{array} \right.
\eeqnum
where all variables have similar meanings as in System \eqref{system2_timescale}. Tracking control is achieved by using saturated control inputs and under the assumption that the velocities are bounded \cite{chitour_2001,ACTA2012}. This assumption holds true physically as resistive drag forces increase as the velocity increases and therefore the latter cannot increase indefinitely if the control is bounded. These assumptions are also valid for the reference system and are formalized in the following manner:  
\begin{Assumption} \label{ass1}
\noindent There exist constraints on the control inputs and velocities such that
\beqnum
	\left| \bar \tau _1 \right| \le \bar \tau_{1,\max},\ \left| \bar \tau_2 \right| \le \bar \tau_{2,\max},\
	\left| u \right| \le \bar u_{\max},\ \left| v \right| \le \bar v_{\max},
\eeqnum
where $\bar \tau_{1,\max}$, $\bar \tau_{2,\max}$, $\bar u_{\max}$ and $\bar v_{\max}$  are known positive constants. The velocities $u_{re}$, $v_{re}$ and the forces $\tau_{1,re}$ and $\tau_{2,re}$ verify the same bounds as above and the reference angle $\psi_{re}$ does not converge to a finite limit as $t$ tends towards infinity.
\end{Assumption}

The variable and time-scale change defined in the previous section requires the following new bounds to be defined for the new control inputs $\tau_1$ and $\tau_2$, denoted by $\tau_{1,max}$ and $\tau_{2,max}$ respectively: $\tau_{1,max} = \frac{\bar \tau_{1,max}}{\rho d^2}$ and $\tau_{2,max} = \frac{\bar \tau_{2,max}}{d^2}$.
We consider the following condition upon the saturation limits of the control inputs, to be used later on in the control design. We use here $m_1$ to denote $\min(a_1/2,b_1)$.
\beqnum\label{C1}
\text{\textbf{C1: }}\quad  \beta \frac{\tau_{1,max}^2}{a_1 m_1} < \tau_{2,max}.
\eeqnum
Note that this condition is always satisfied by choosing $\rho > \frac{\bar \tau_{1,max}}{d}\sqrt{\frac{\beta}{a_1 m_1 \bar \tau_{2,max}}}$. Our control objective is that $(S)$ follows $(S_{re})$. With respect to the frame of reference of the reference trajectory $(S_{re})$, the error system is defined as
\beqnum \label{error1}
e_x=\cos(\psi_{re})(x - x_{re})+\sin(\psi_{re})(y - y_{re}), \quad 
e_y=-\sin(\psi_{re})(x - x_{re})+\cos(\psi_{re})(y - y_{re}),\\
e_u=u- u_{re},\ e_v=v-v_{re},\ e_\psi=\psi-\psi_{re},\ e_r=r-r_{re}.
\eeqnum
Defining new controllers $w_1$ and $\tilde w_2$, as follows, $w_1: = \tau_1 - \tau_{1,re}$ and $\tilde w_2: = \tau_2 - \tau_{2,re}$,
the dynamics of system \eqref{error1} becomes
\beqnum \label{sys_error}
(S_e)\left\{ \begin{array}{cll}
{\left( \begin{array}{l}
	\dot e_x\\
	\dot e_y
\end{array} \right) } &=&  - r_{re} A_1 \left( \begin{array}{l}
	e_x\\
	e_y
\end{array} \right) + D_\rho \left( \begin{array}{l}
	e_u\\
	e_v
\end{array} \right) + sin(e_\psi) A_1 D_\rho \left( \begin{array}{l}
	u_{re}\\
	v_{re}
\end{array} \right)\\
	&& + \left( cos\left( e_\psi \right) - 1 \right) D_\rho \left( \begin{array}{l} u_{re} \\ v_{re} \end{array} \right)
		 + \left( R_{e_\psi} - Id_2 \right) \left( \begin{array}{l} e_u \\ e_v  \end{array} \right),\\
		
{\left( \begin{array}{l}
	\dot e_u\\
	\dot e_v
\end{array} \right) } &=&  - D \left( \begin{array}{l}
	e_u\\
	e_v
\end{array} \right) - r_{re} A_1 \left( \begin{array}{l}
	e_u \\
	e_v
\end{array} \right) - e_{r} A_1 \left( \begin{array}{l}
	u_{re}\\
	v_{re}
\end{array} \right) + \left( \begin{array}{l}
	1\\
	0
\end{array} \right) w_1 +   e_r  \left( \begin{array}{c} - e_v\\ e_u  \end{array} \right),\\
\dot e_\psi &=& e_r,\quad
\dot e_r = \beta \left( uv - u_{re} v_{re} \right) - e_r + \tilde w_2.\end{array} \right.
\eeqnum
The control objective is to force the error system $(S_e)$ to $0$, using $w_1$ and $\tilde w_2$.

\section{Controller design}
We first develop the following intermediate result, concerning the bounds of $u$, $v$, $r$. 
\begin{lemme}\label{limsup_uvr}
The variables $u$, $v$, $r$ are bounded and satisfy
\beqnum \label{ineq_uvr}
\begin{array}{cll}
	\mathop{\lim \sup}\limits_{t\to\infty}  \Vert  (u,v) \Vert &\le& \frac{\tau_{1,\max }}{2\sqrt{a_1m_1}},\quad
	\mathop{\lim \sup}\limits_{t\to\infty} \left| r \right| \le \tau_{2,\max} + \beta \frac{\tau_{1,\max}^2}{2a_1 m_1}.
\end{array}
\eeqnum
\end{lemme}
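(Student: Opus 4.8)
The plan is to establish the two lim sup bounds by constructing Lyapunov-type functions for the velocity subsystems and analyzing their decay. The key observation is that the $(u,v)$ dynamics in \eqref{system2_timescale} are driven only by the damping $-D$, the rotation term $-rA_1$, and the control $\tau_1$, and crucially the matrix $A_1 = D_\rho^{-1}A_c D_\rho$ is designed so that the skew-symmetric rotation term does no net work in an appropriately weighted norm. First I would consider the quadratic form $V_1 = \frac{1}{2}(a_1 u^2 + b_1 v^2)$ (or the natural weighted energy that makes the $r A_1$ coupling drop out) and compute $\dot V_1$ along the trajectories of $(S)$. The rotation term should contribute zero (or cancel) by the skew-symmetry built into $A_1$ together with the weighting $D_\rho$, leaving $\dot V_1 = -(\text{positive damping terms}) + \tau_1 u$. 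Using $m_1 = \min(a_1/2,b_1)$ and $|\tau_1|\le \tau_{1,\max}$, I would bound $\dot V_1 \le -2m_1 V_1 + \tau_{1,\max}\Vert(u,v)\Vert$, then complete the square or use Young's inequality to absorb the forcing term and obtain an ultimate bound on $V_1$, hence on $\Vert(u,v)\Vert$. A comparison-lemma argument then yields $\limsup_{t\to\infty}\Vert(u,v)\Vert \le \tau_{1,\max}/(2\sqrt{a_1 m_1})$.

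Next I would treat the $r$-dynamics, $\dot r = \beta uv - r + \tau_2$, as a scalar stable linear system forced by $\beta uv + \tau_2$. The damping coefficient is $1$, so by the same comparison/ISS-type reasoning the ultimate bound on $|r|$ is governed by the ultimate bound on the forcing $|\beta uv + \tau_2|$. Here I would use $|\tau_2|\le\tau_{2,\max}$ together with the already-established bound on $\Vert(u,v)\Vert$: since $|uv|\le \frac{1}{2}\Vert(u,v)\Vert^2$, the first step gives $\limsup |uv| \le \frac{1}{2}\bigl(\tau_{1,\max}/(2\sqrt{a_1 m_1})\bigr)^2 = \tau_{1,\max}^2/(8 a_1 m_1)$. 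Combining these with the unit damping produces $\limsup_{t\to\infty}|r| \le \tau_{2,\max} + \beta\,\tau_{1,\max}^2/(2 a_1 m_1)$, matching the claimed constant up to how the $uv$ term is estimated (the stated constant suggests using $|uv|\le \frac{1}{2}\Vert(u,v)\Vert^2$ with the factor tracked carefully).

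I expect the main obstacle to be verifying cleanly that the gyroscopic coupling term $r A_1\binom{u}{v}$ makes no contribution to $\dot V_1$, i.e. that the chosen weighted energy renders $A_1$ effectively skew-symmetric despite the factor $1/c$ and $c$ appearing in $A_c$; this is exactly where the conjugation $A_1 = D_\rho^{-1}A_c D_\rho$ and the weights $a_1,b_1$ must be chosen consistently. The second delicate point is the rigorous passage from a differential inequality $\dot V \le -\lambda V + (\text{forcing})$ to a $\limsup$ statement, which requires a standard comparison lemma and the fact that the forcing itself has a controlled $\limsup$ (a cascade argument, since the $r$-bound depends on the $(u,v)$-bound). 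I would make this precise by first proving the $(u,v)$ bound unconditionally, then feeding it into the $r$-estimate, being careful that lim sups compose correctly through the ISS gain of the scalar $r$-system.
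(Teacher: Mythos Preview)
Your overall strategy---a quadratic Lyapunov function for $(u,v)$, Young's inequality to absorb $\tau_1 u$, then a cascade ISS argument for the scalar $r$-dynamics---is exactly the paper's approach. The one place where your plan goes slightly wrong is the choice of weighting: the candidate $V_1=\tfrac12(a_1 u^2+b_1 v^2)$ would \emph{not} annihilate the gyroscopic term, since $a_1 u(rv)+b_1 v(-ru)=(a_1-b_1)ruv$ does not vanish when $a_1\neq b_1$. The point you are anticipating as the ``main obstacle'' is in fact already resolved by the change of variables: with $D_\rho=\mathrm{diag}(\rho,c\rho)$ one computes $A_1=D_\rho^{-1}A_c D_\rho=\left(\begin{smallmatrix}0&-1\\ 1&0\end{smallmatrix}\right)$, so $A_1$ is the standard skew-symmetric matrix and the \emph{unweighted} energy $V=\tfrac12(u^2+v^2)$ makes the $r$-coupling drop out identically. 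The paper uses precisely this $V$, obtains $\dot V\le -\tfrac{a_1}{2}u^2-b_1 v^2+\tau_1^2/(2a_1)\le -2m_1 V+\tau_1^2/(2a_1)$, and then treats $r$ via $V_r=\tfrac12 r^2$ exactly as you propose.
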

\begin{proof}
Let us consider the positive definite function
\beq V_{u,v} = \frac{1}{2}(u^2 + v^2) ,\eeq then
\beq
\dot V_{u,v} = u\dot u + v\dot v  
 &=&u\left( - a_1 u + rv  + \tau_1 \right) + v \left( { - b_1 v - r u} \right)\\
 &\leq&   -\frac{a_1}2u^2- b_1 v^2 + \frac{\tau_1^2}{2 a_1}\\  
 &\leq&  - m_1(u^2+v^2) + \frac{\tau_1^2}{2 a_1},
\eeq
we can see that $\dot V_{u,v} \le 0$ for $(u^2 + v^2)\ge \frac{\tau_1^2}{2 a_1 m_1}$, which directly implies the first inequality in \eqref{ineq_uvr} (see Chapter 9 of \cite{Khalil}). Similarly, consider $V_r = \frac1{2}r^2$ then,
$
\dot V_r = r\dot r = -r^2 - r \left(\tau_2 + \beta uv \right)=-r(r+\tau_2 + \beta uv ),
$
$\dot V_r\le 0 $ for $|r| \ge |\tau_2|+|\beta u v|$, from which we derive the second inequality in \eqref{ineq_uvr}.
\end{proof}

\begin{rem}\label{limsup_uvr_ref}
As the reference trajectory system is similar to the vessel model, it can be shown that the limits defined in Lemma \ref{limsup_uvr} are valid for $u_{re}$, $v_{re}$, $r_{re}$ as well.
\end{rem}

\noindent We define a new control variable $w_2: = \beta \left( uv - u_{re} v_{ref} \right) + \tilde w_2$. As the upper bounds of $u$, $v$, $u_{re}$ and $v_{ref}$ are known according to Lemma \ref{limsup_uvr} and Remark \ref{limsup_uvr_ref}, we obtain
	\beqnum
		\mathop{\lim \sup}\limits_{t\to\infty} \beta \left| uv - u_{re} v_{ref}  \right| \le \beta \frac{\tau_{1,max}^2}{a_1 m_1}.
	\eeqnum
If $\vert w _2\vert\leq U_2$, for a positive constant $U_2$, one must have $U_2 + \beta \frac{\tau_{1,max} ^ 2}{a_1 m_1} \le \tau_{2,max}$, which is guaranteed by Condition \textbf{C1}.
With these preliminaries established, we will now proceed to fulfill the control objective by using the bounded controls $w_1$ and $w_2$. Let $\sigma (.)$ be a standard saturation function,
i.e., $\sigma(t)=\frac{t}{\max(1,\vert t\vert)}$.  The main result of the paper is given next.

\begin{montheo}\label{main_theo}
If Assumption \ref{ass1} and Condition \textbf{C1} are fulfilled, then for an appropriate choice of constants $U_1,\ \rho,\ \xi,\ M,\ U_2,\ k_1,\ k_2,\ \mu$ satisfying:
\beq
	a_1 > U_1 + \rho,\  U_1 > \frac{\left| a_1 - \frac{b_1}{c} \right|}{ \min\left( a_1 ,\  \frac{b_1}{c} \right) }\rho,\ U_2>0,\\
	M>0,\  k_1>k_2-1>0,\ 	a_1+\xi= \mu \rho,\  b_1 = \mu c \rho,
\eeq
then the following controller ensures global asymptotic stability of the tracking error system $(S_e)$:
\beqnum\label{toto-2}
	w_1 &=&  - U_1 \sigma \left(  \frac{\xi e_u}{U_1}\right) - \rho \sigma \left(  M (e_x + \frac1{\mu} e_u)\right),
\\ w_2 &=&  - U_2 \sigma \left(  \frac{k_1}{U_2} e_\psi +\frac{k_2 - 1}{U_2} e_r \right).
\eeqnum
\end{montheo}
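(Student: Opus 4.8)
The plan is to treat the error system $(S_e)$ as a cascade and to exploit the a priori bounds furnished by the saturations. Since $|\sigma|\le 1$, the feedback satisfies $|w_1|\le U_1+\rho$ and $|w_2|\le U_2$, and together with Lemma~\ref{limsup_uvr} and Remark~\ref{limsup_uvr_ref} this keeps $e_u=u-u_{re}$ and $e_v=v-v_{re}$ bounded. Substituting the definition $w_2=\beta(uv-u_{re}v_{re})+\tilde w_2$ into the last line of $(S_e)$ collapses the yaw--error dynamics into the \emph{autonomous} planar system $\dot e_\psi=e_r$, $\dot e_r=-e_r+w_2$, decoupled from the remaining states. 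I would first prove that this inner subsystem is GAS, then analyze the outer $(e_x,e_y,e_u,e_v)$--subsystem in which $(e_\psi,e_r)$ enters only through the bounded, vanishing terms $\sin(e_\psi)$, $\cos(e_\psi)-1$, $R_{e_\psi}-Id_2$ and the factors $e_r$, and finally splice the two using a converging--input/converging--state (cascade) argument.

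For the inner loop, writing $s:=\frac{k_1e_\psi+(k_2-1)e_r}{U_2}$ and $S(x):=\int_0^x\sigma(\theta)\,d\theta\ge 0$, I would test the Lyapunov function $V_\psi=\frac12 e_r^2+\frac{U_2^2}{k_1}S(s)$. Using $\dot e_r=-e_r-U_2\sigma(s)$, a direct computation yields $\dot V_\psi=-e_r^2-\frac{U_2(k_2-1)}{k_1}e_r\,\sigma(s)-\frac{U_2^2(k_2-1)}{k_1}\sigma^2(s)$, which is a negative semidefinite quadratic form in $(e_r,\sigma(s))$ as soon as $4k_1\ge k_2-1$; this is implied by the hypothesis $k_1>k_2-1>0$. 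Radial unboundedness of $V_\psi$ and LaSalle's invariance principle then give $(e_\psi,e_r)\to(0,0)$, so the inner subsystem is GAS.

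For the outer subsystem I would first record that $A_1$ is the rotation generator, $A_1^{\top}=-A_1$, so that every $r_{re}A_1$ term (and the bilinear term $e_r(-e_v,e_u)^{\top}$) cancels in the derivatives of $\frac12\Vert(e_x,e_y)\Vert^2$ and $\frac12\Vert(e_u,e_v)\Vert^2$, leaving in the limit dynamics $\frac{d}{dt}\frac12\Vert(e_u,e_v)\Vert^2=-a_1e_u^2-b_1e_v^2+e_uw_1$. The design relations $a_1+\xi=\mu\rho$ and $b_1=\mu c\rho$ (so that $\xi=\frac{b_1}{c}-a_1$) are chosen precisely so that, in the variable $z:=e_x+\frac1\mu e_u$, one has $\dot z=r_{re}(e_y+\frac1\mu e_v)+\frac\xi\mu e_u+\frac1\mu w_1$, in which the terms $\frac\xi\mu e_u$ and $-\frac{U_1}{\mu}\sigma(\frac{\xi e_u}{U_1})$ annihilate on the linear range of the first saturation; $z$ then obeys the self--stabilizing law $\dot z=-\frac\rho\mu\sigma(Mz)+r_{re}(e_y+\frac1\mu e_v)$. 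The inequalities $a_1>U_1+\rho$ and $U_1>\frac{|a_1-b_1/c|}{\min(a_1,b_1/c)}\rho$ are exactly what make the $e_u$-- and $z$--damping dominate the bounded cross terms, yielding boundedness and, up to the lateral coupling, convergence of $e_u$, $z$ (hence $e_x$) and $e_v$.

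The hard part will be the lateral error $e_y$. Because the actuators act only on the surge/yaw channels, $e_y$ carries no damping, and indeed $\frac{d}{dt}\frac12\Vert(e_x,e_y)\Vert^2=\rho e_xe_u+c\rho e_ye_v$ contains the sign--indefinite product $c\rho e_ye_v$, which no quadratic term in $e_y$ can absorb; a monotone Lyapunov function for $e_y$ therefore cannot exist. This is exactly where the clause of Assumption~\ref{ass1} stating that $\psi_{re}$ has no finite limit becomes indispensable: a persistently turning reference keeps $r_{re}=\dot\psi_{re}$ from dying out and thereby rotates the lateral error into the damped longitudinal direction. I would accordingly avoid a monotone argument for $e_y$ and instead, after establishing boundedness and uniform continuity of all signals, apply Barbalat's lemma to obtain $\dot e_u,\dot e_v,\dot z\to0$ and then run a Matrosov/invariance--type argument on the time--varying limit dynamics in which the persistent excitation of $r_{re}$ excludes any nonzero invariant lateral offset and forces $e_y\to0$. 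Combining this with the cascade lemma to absorb the vanishing $(e_\psi,e_r)$ perturbation would establish GAS of $(S_e)$; making the persistent--excitation step quantitative and uniform in the initial data is the most delicate point.
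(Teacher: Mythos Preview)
Your overall cascade strategy matches the paper's, and your inner--loop analysis is essentially correct (the paper takes the coefficient $\alpha=\frac{k_1-k_2+1}{U_2^2}$ so that the cross term cancels exactly, giving $\dot V=-\alpha e_r^2-(k_2-1)\sigma^2(z)$, but your quadratic--form variant works too). However, there are two genuine gaps in the outer--loop part.

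\textbf{First, boundedness of $e_y$ is never established.} You work with the scalar $z=W_1=e_x+\frac1\mu e_u$ and correctly derive $\dot z=-\frac{\rho}{\mu}\sigma(Mz)+r_{re}W_2+o(1)$, but this equation contains $r_{re}W_2$ with $W_2=e_y+\frac1\mu e_v$ \emph{not yet known to be bounded}; nothing you have written prevents $e_y$ from growing unboundedly, in which case the scalar equation for $z$ is useless and the subsequent Barbalat step has no uniform--continuity premise. The paper avoids this by looking at the full vector $W=(e_x,e_y)^\top+\frac1\mu(e_u,e_v)^\top$ and computing $W^\top\dot W$: the skew--symmetry of $A_1$ kills the $r_{re}A_1W$ term, leaving $\big|W^\top\dot W\big|+\frac{\rho}{\mu}W_1\sigma(MW_1)\le \|W\|\,O(\|(e_\psi,e_r)\|)$. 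Because $(e_\psi,e_r)$ is shown to converge \emph{exponentially} (once the saturation desaturates), the right--hand side is integrable, hence $\|W\|$ is bounded, has a finite limit, and $W_1\sigma(MW_1)$ is integrable; Barbalat then gives $W_1\to0$. Note that this step also needs the exponential rate on $(e_\psi,e_r)$, while your LaSalle argument only yields asymptotic convergence.

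\textbf{Second, the mechanism forcing $e_y\to0$ is not the one you invoke.} You propose a Matrosov/invariance argument powered by ``persistent excitation of $r_{re}$'', but Assumption~\ref{ass1} is strictly weaker: it only says $\psi_{re}$ has no finite limit, which allows $r_{re}\to0$ (e.g.\ $\psi_{re}(t)=\log t$). A PE--based argument would not close under the stated hypothesis. The paper instead rotates into the inertial frame by setting $\tilde W:=R_{\psi_{re}}W$; the term $\dot\psi_{re}A_1R_{\psi_{re}}W$ cancels $-r_{re}R_{\psi_{re}}A_1W$, so $\dot{\tilde W}=O(|e_\psi|,|e_r|,W_1\sigma(MW_1))$ is integrable and $\tilde W$ has a limit $W^l$. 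Since $W\to(0,\bar W_2)$, one gets $(-\bar W_2\sin\psi_{re},\bar W_2\cos\psi_{re})\to W^l$; if $\bar W_2\neq0$ this forces $\psi_{re}$ to converge, contradicting Assumption~\ref{ass1}. This rotation trick, not persistent excitation of $r_{re}$, is the missing idea.
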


\noindent \textbf{Proof of Theorem~\ref{main_theo}}.

We first consider the errors $e_\psi$ and $e_r$ and take $k_1,k_2>0$ large in the control input $w_2$ defined previously.
The dynamics of $e_\psi$ and $e_r$ in $(S_e)$ $\dot e_\psi = e_r$ and $\dot e_r = - e_r - U_2 \sigma \left(  \frac{k_1}{U_2} e_\psi +\frac{k_2 - 1}{U_2} e_r \right)$.
\begin{lemme}\label{lemme2}
If $U_2 > 0$ and $k_1 > k_2-1>0$, then after a sufficiently large time, the saturated control operates in its linear region and the errors $e_\psi$ and $e_r$ converge to zero exponentially.
\end{lemme}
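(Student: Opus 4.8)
The plan is to reduce the claim to a planar analysis in $(e_\psi,e_r)$ and split it into an easy \emph{linear-region} part and a harder \emph{global-entry} part. First I would record the linear-region dynamics: writing $s:=\frac{k_1}{U_2}e_\psi+\frac{k_2-1}{U_2}e_r$ for the argument of $\sigma$, one has $\sigma(s)=s$ whenever $|s|\le 1$, and the closed loop then collapses to the linear equation $\ddot e_\psi+k_2\dot e_\psi+k_1 e_\psi=0$. Its characteristic polynomial $\lambda^2+k_2\lambda+k_1$ is Hurwitz because $k_1>0$ and $k_2=(k_2-1)+1>1>0$ under the hypothesis $k_1>k_2-1>0$; hence $e_\psi$ and $e_r$ decay exponentially as soon as the trajectory is trapped in $\{|s|\le 1\}$. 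The whole content of the lemma is therefore to show that the trajectory does enter, and then remains in, this region after a finite time.

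For the global part I would pass from $(e_\psi,e_r)$ to the coordinates $(e_r,s)$, which is a linear isomorphism since $k_1\neq 0$. A direct computation gives $\dot s=\frac{p}{U_2}e_r-q\,\sigma(s)$ with $p:=k_1-(k_2-1)$ and $q:=k_2-1$, alongside $\dot e_r=-e_r-U_2\sigma(s)$; note that the hypothesis $k_1>k_2-1>0$ is exactly what guarantees $p>0$ and $q>0$. I would then test the Lyapunov candidate
\[
W:=\frac12 e_r^2+\frac{U_2^2}{p}\int_0^{s}\sigma(\tau)\,d\tau ,
\]
which is positive definite and radially unbounded, since $\int_0^{s}\sigma$ behaves like $\frac12 s^2$ near $0$ and grows linearly at infinity. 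Differentiating along the flow,
\[
\dot W=-e_r^2-U_2 e_r\sigma(s)+U_2 e_r\sigma(s)-\frac{U_2^2 q}{p}\sigma(s)^2=-e_r^2-\frac{U_2^2 q}{p}\sigma(s)^2\le 0,
\]
the indefinite cross term $e_r\sigma(s)$ cancelling precisely because of the weight $U_2^2/p$ placed on the saturation integral.

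Finally I would invoke LaSalle's invariance principle for this autonomous planar system: $\dot W=0$ forces $e_r=0$ and $\sigma(s)=0$, i.e. $e_r=s=0$, which in the original variables is the single point $(e_\psi,e_r)=0$. The largest invariant subset of $\{\dot W=0\}$ is thus the origin, so every trajectory converges to it; in particular $s(t)\to 0$, and hence there is a finite $T$ with $|s(t)|\le 1$ for all $t\ge T$. From $T$ onward the saturation operates in its linear region, and the linear-region analysis of the first step delivers the exponential convergence of $e_\psi$ and $e_r$.

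I expect the main obstacle to be the construction of this Lyapunov function — specifically, recognising that $(e_r,s)$ are the right coordinates and that the weight $U_2^2/p$ is what annihilates the cross term $e_r\sigma(s)$. This is also where the structural condition $k_1>k_2-1>0$ is genuinely used, twice, as the positivity of both $p$ and $q$. A minor point still to verify is that convergence to the origin confines the trajectory to $\{|s|\le 1\}$ permanently rather than only transiently, which is immediate from $s(t)\to 0$.
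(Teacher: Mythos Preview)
Your proof is correct and is essentially the same as the paper's: your Lyapunov function $W=\tfrac12 e_r^2+\tfrac{U_2^2}{p}\int_0^s\sigma$ is precisely the paper's $V=\tfrac{\alpha}{2}e_r^2+S(z)$ with $\alpha=\tfrac{k_1-k_2+1}{U_2^2}=\tfrac{p}{U_2^2}$, rescaled by the positive constant $\tfrac{U_2^2}{p}$, and your derivative computation matches theirs line for line. The only cosmetic difference is that the paper observes directly that $\dot V<0$ away from the origin (since $\sigma(z)=0\Leftrightarrow z=0$, and $(e_r,z)=(0,0)\Leftrightarrow(e_\psi,e_r)=(0,0)$), whereas you route the same conclusion through LaSalle; you could shorten your argument by noting $W$ is strict.
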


\begin{proof}
Let us consider the Lyapunov function $V$,
\beqnum \label{Lyapunov1}
	V = \frac{\alpha}{2} e_r^2 + S\left( \frac{k_1}{U_2} e_\psi + \frac{k_2 - 1}{U_2} e_r \right),
\eeqnum
where $S(\xi):=\int^{\xi}_{0} \sigma(s) ds$, and $\alpha = \frac{k_1 - k_2 + 1}{U_2^2}>0$.
Set $z:=\frac{k_1}{U_2} e_\psi + \frac{k_2 - 1}{U_2} e_r$. Then, one has 
\beqnum
	\dot V = \alpha e_r \dot e_r + \sigma(z)\dot z
= -\alpha e_r^2 - (k_2 - 1 ) \sigma^2(z).\eeqnum
Then $\dot V<0$ for $(e_\psi , e_r) \neq (0,0)$; and after a finite time we obtain
$\left| \frac{k_1}{U_2} e_\psi + \frac{k_2 - 1}{U_2} e_r \right| \le 1$.
The dynamics of $e_\psi$ and $\ e_r$ becomes linear, i.e., 
$\dot e_\psi=e_r$ and $\dot e_r=- k_1 e_\psi -k_2e_r$.
As $k_1,k_2>0$, $(e_\psi,e_r)$ converges exponentially to zero.
\end{proof}

Lemma \ref{lemme2} shows that the errors  $e_\psi$ and $e_r$ converge to zero under the control $w_2$. We will now consider the errors $e_u$ and $e_v$. We choose the constants $\mu$ and $\xi$ such that $a_1 + \xi =\mu\rho$ and $b_1=\mu c\rho$, implying that 
$\xi=b_1/c - a_1$ and $\mu>0$.
\begin{lemme} \label{conv_eu_ev}
Consider the dynamics of $e_u$ and $e_v$ given in Equation \eqref{sys_error}. If $U_1$ and $\rho$ are chosen as
$a_1 > U_1 + \rho$ and $U_1 > \frac{\left| a_1 - \frac{b_1}{c} \right|}{ \min\left( a_1 , \frac{b_1}{c} \right) }\rho$, 
then the control
$w_1: = -U_1 \sigma \left(\frac{\xi e_u}{U_1}\right) - \rho \sigma_1 (.)$,
with $ \sigma_1 (.)$ to be chosen later, ensures that $e_u$ and $e_v$ satisfy the following inequalities:
\beqnum\label{toto_w1}
	\mathop {\lim \sup }\limits_{t \to \infty } \Vert (e_u,e_v)  \Vert\leqslant \frac{\rho }{\sqrt{m_2\tilde a}},\hbox{ with }
\tilde a = \mathop {\inf }\limits_{t > 0} \left(a_1+\xi\frac{\sigma \left (\frac{\xi e_u}{U_1}\right)}{\frac{\xi e_u}{U_1}}\right)> 0, \quad m_2:=\min(\tilde a/2,b_1).
\eeqnum

\end{lemme}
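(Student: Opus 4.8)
The proof I would give is a cascade/ISS argument on the $(e_u,e_v)$--subsystem, and the computation hinges on the fact that $A_1=D_\rho^{-1}A_cD_\rho=\begin{pmatrix}0&-1\\1&0\end{pmatrix}$ is skew--symmetric, so that $(e_u,e_v)A_1(e_u,e_v)^T=0$ and $A_1$ is a Euclidean isometry. First I would record the cascade structure: by Lemma~\ref{lemme2}, $e_r\to0$ exponentially, and by Lemma~\ref{limsup_uvr} and Remark~\ref{limsup_uvr_ref} the signals $u,v,u_{re},v_{re}$, hence $e_u,e_v$, are bounded. The coupling term $-e_rA_1(u_{re},v_{re})^T$ in \eqref{sys_error} is then a vanishing perturbation $\delta(t)$ with $\|\delta(t)\|\le|e_r|\,\|(u_{re},v_{re})\|\to0$; the plan is to prove an ultimate bound for the nominal system and then drive the perturbation to zero.

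The central step is to take $V_e=\tfrac12(e_u^2+e_v^2)$ and differentiate along \eqref{sys_error}. Skew--symmetry of $A_1$ annihilates the term $-r_{re}(e_u,e_v)A_1(e_u,e_v)^T$ as well as the nonlinear term $e_r(e_u,e_v)(-e_v,e_u)^T=e_r(e_u,e_v)A_1(e_u,e_v)^T$, so only the diagonal dissipation $-a_1e_u^2-b_1e_v^2$, the actuated term $e_uw_1$, and the perturbation survive. Substituting the first piece of $w_1$ and writing $g(e_u):=\sigma(\xi e_u/U_1)/(\xi e_u/U_1)\in(0,1]$, the contribution $-U_1e_u\sigma(\xi e_u/U_1)$ equals $-\xi g(e_u)e_u^2$, so the effective surge damping is $a_1+\xi g(e_u)$. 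Since $e_u$ is bounded, $g$ is bounded away from zero, and as $b_1/c=a_1+\xi$ one checks $a_1+\xi g(e_u)\ge\min(a_1,b_1/c)>0$; hence the infimum $\tilde a$ in \eqref{toto_w1} is strictly positive. The hypotheses $a_1>U_1+\rho$ and $U_1>\frac{|a_1-b_1/c|}{\min(a_1,b_1/c)}\rho$ will additionally force $\xi e_u/U_1$ to remain eventually in $[-1,1]$, so the first saturation acts linearly, which the later stages exploit.

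Collecting terms gives $\dot V_e\le-\tilde a e_u^2-b_1e_v^2-\rho e_u\sigma_1+\delta(t)\|(e_u,e_v)\|$. As $\sigma_1$ is a saturation, $|\sigma_1|\le1$ and $-\rho e_u\sigma_1\le\rho|e_u|$. Splitting $\tilde a e_u^2=\tfrac{\tilde a}{2}e_u^2+\tfrac{\tilde a}{2}e_u^2$ and completing the square, $-\tfrac{\tilde a}{2}e_u^2+\rho|e_u|\le\frac{\rho^2}{2\tilde a}$, while $-\tfrac{\tilde a}{2}e_u^2-b_1e_v^2\le-m_2\|(e_u,e_v)\|^2$ with $m_2=\min(\tilde a/2,b_1)$. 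This yields $\dot V_e\le-m_2\|(e_u,e_v)\|^2+\frac{\rho^2}{2\tilde a}+\delta(t)\|(e_u,e_v)\|$, so $V_e$ strictly decreases whenever $\|(e_u,e_v)\|$ exceeds a radius of order $\rho/\sqrt{m_2\tilde a}$, which is exactly the claimed limsup bound (the radius is in fact a touch smaller, so the stated inequality holds with room).

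The main obstacle is to make this rigorous in the presence of the genuinely time--varying $\delta(t)$: the inequality above is not autonomous, so one cannot simply read off an invariant sublevel set. I would close the gap by a standard cascade argument — first use the a priori bounds of Lemma~\ref{limsup_uvr} to secure forward completeness and uniform boundedness of $(e_u,e_v)$ on $[0,\infty)$, then, given $\varepsilon>0$, pick $T$ with $\delta(t)\le\varepsilon$ for $t\ge T$, apply the comparison lemma to $V_e$ on $[T,\infty)$ to obtain an $\varepsilon$--dependent ultimate bound, and let $\varepsilon\to0$ to recover the clean radius $\rho/\sqrt{m_2\tilde a}$. A secondary technical point, again resting on the boundedness of $e_u$, is the uniform positivity of $\tilde a$, i.e. that $g(e_u)$ never degenerates to $0$.
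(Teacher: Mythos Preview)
Your argument is correct and is essentially the same as the paper's: both take $V_e=\tfrac12(e_u^2+e_v^2)$, use the skew--symmetry of $A_1$ to kill the rotational terms, rewrite $-U_1e_u\sigma(\xi e_u/U_1)$ as $-\xi g(e_u)e_u^2$ to obtain the effective surge damping $\tilde a$, and complete the square on $-\rho e_u\sigma_1$ to reach the dissipation inequality $\dot V_e\le -m_2\|(e_u,e_v)\|^2+\rho^2/\tilde a$ (the paper drops the factor $1/2$ you retain, which only loosens the bound). Your treatment of the $e_r$--coupling as a vanishing perturbation and the $\varepsilon$--comparison argument is more explicit than the paper's one--line ``for large $t$, $e_r\to0$'', but the substance is identical; one minor overstatement is that $\tilde a>0$ does \emph{not} actually require boundedness of $e_u$, since $a_1+\xi g\ge\min(a_1,a_1+\xi)=\min(a_1,b_1/c)$ for all $g\in(0,1]$.
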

\begin{proof} Notice that $\tilde a>0$ since it is trivially the case if $\xi\geq 0$, and otherwise, $\tilde a\geq a_1+\xi=\frac{b_1}c$. Then, one has $e_u\dot e_u+e_v\dot e_v=-a_1 e_u^2 -b_1 e_v^2+
 e_r(e_u v_{re}- e_v u_{re}) + e_u w_1$. By applying the control $w_1$, we get
$e_u\dot e_u+e_v\dot e_v\leq
-a_1 e_u^2 -b_1 e_v^2 -U_1 e_u \sigma \left(\frac{\xi e_u}{U_1}\right) - \rho e_u \sigma_1 (.) +C_0 \left|e_r\right|\sqrt{e_u^2+e_v^2}$,
where $C_0:= \bar u_{max} + \bar v_{max}$. According to Lemma \ref{lemme2}, $e_r$ tends to zero, i.e., for large $t$, one has
$e_u\dot e_u+e_v\dot v\leq
- \left[a_1+\xi s(t)  \right] e_u^2 - b_1 e_v^2 - \rho e_u \sigma_1 (.)$.
Then \eqref{toto_w1} results from the following inequality
\beqnum\label{label_lemma4}
e_u\dot e_u+e_v\dot v \leq
-m_2(e_u^2+e_v^2)+\frac{\rho^2\sigma_1^2(\cdot)}{\tilde a}.
\eeqnum
\end{proof}
Lemma \ref{conv_eu_ev} proves the convergence of $e_u$ and $e_v$ to a neighborhood of zero.  Since
$\tilde a \ge \min \left( a_1 , \frac{b_1}{c} \right)$, one gets that $\mathop {\lim \sup }\limits_{t \to \infty } \left| \frac{\xi e_u}{U_1} \right|  < 1$, and the controller exits saturation in finite time and enter its linear region of operation.  We get $\sigma\left( \frac{\xi e_u}{U_1} \right) = \frac{\xi e_u}{U_1}$,
and the dynamics of $e_u$ and $e_v$ become
\beq\label{e_uv_sat}
 \dot e_u = - \mu\rho e_u+r_{re}e_v- \rho\sigma _1(.)+e_rv_{re}-e_re_v,\quad
  \dot e_v = - \mu c\rho e_v-r_{re}e_u-e_ru_{re}+e_re_u.
  \eeq
Define $W=\left(W_1, W_2\right)^T:=(e_x,e_y)^T+(e_u,e_v)^T/\mu$.
Then one has the following result.
\begin{lemme}\label{bounded_W}
Let $W$ defined previously and the controller $w_1$ given in \eqref{toto-2} with $\sigma_1(.) = \sigma(M W_1)$, where $M$ is an arbitrary positive constant. Then
$W$ tends to a finite limit $\bar W =(0, \bar W_2 )^T$.
\end{lemme}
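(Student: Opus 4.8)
\noindent The plan is to eliminate the $(e_u,e_v)$-coupling from the $W$-dynamics, reduce $W$ to a planar system perturbed by an \emph{integrable} term, and then combine a Lyapunov/Barbalat argument with a phase argument.

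\emph{Step 1 (reduced dynamics).} First I would differentiate $W=(e_x,e_y)^T+(e_u,e_v)^T/\mu$, using the $(e_x,e_y)$-block of $(S_e)$ together with the reduced $(e_u,e_v)$-dynamics valid once the saturation $\sigma(\xi e_u/U_1)$ has entered its linear regime. Since $A_1=D_\rho^{-1}A_cD_\rho=\left(\begin{smallmatrix}0&-1\\1&0\end{smallmatrix}\right)$, the leading dissipation in the $(e_u,e_v)$-equation is $-\mu D_\rho(e_u,e_v)^T$, whose $1/\mu$-multiple cancels exactly the $+D_\rho(e_u,e_v)^T$ term in $\dot{(e_x,e_y)}$, while the two $-r_{re}A_1(\cdot)$ terms combine into $-r_{re}A_1W$. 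Writing $e_1:=(1,0)^T$, this gives
\beqnum\label{Wdyn}
\dot W \;=\; -\,r_{re}\,A_1 W \;-\; \frac{\rho}{\mu}\,\sigma(M W_1)\,e_1 \;+\; p(t),
\eeqnum
where $p(t)$ gathers $\sin(e_\psi)A_1D_\rho(u_{re},v_{re})^T$, $(\cos(e_\psi)-1)D_\rho(u_{re},v_{re})^T$, $(R_{e_\psi}-Id_2)(e_u,e_v)^T$ and $\frac{e_r}{\mu}(v_{re}-e_v,-u_{re}+e_u)^T$. The property I would record here is $p\in L^1$: by Lemma~\ref{lemme2} both $e_\psi,e_r$ decay exponentially after a finite time, while $u_{re},v_{re}$ (Remark~\ref{limsup_uvr_ref}) and $e_u,e_v$ (Lemma~\ref{conv_eu_ev}) are bounded, and $|\sin e_\psi|,\,|1-\cos e_\psi|,\,\Vert R_{e_\psi}-Id_2\Vert=O(|e_\psi|)$; hence every term of $p$ is exponentially decaying times bounded, so $p\in L^1$ and $p(t)\to0$.

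\emph{Step 2 (Lyapunov and Barbalat).} I would take $V=\frac12\Vert W\Vert^2$. As $A_1$ is skew-symmetric, $W^TA_1W=0$, so along \eqref{Wdyn}
\beq
\dot V \;=\; -\,\frac{\rho}{\mu}\,W_1\,\sigma(M W_1) \;+\; W^T p(t)\;\le\;\sqrt{2V}\,\Vert p(t)\Vert .
\eeq
Since $W_1\sigma(MW_1)\ge0$, this differential inequality together with $p\in L^1$ yields that $V$, hence $\Vert W\Vert$, is bounded. Then $\int_{t_0}^\infty W^Tp$ converges absolutely, so the monotone part $\int_{t_0}^{t}\frac{\rho}{\mu}W_1\sigma(MW_1)$ is bounded and $V$ converges to some $L\ge0$ with $\int_{t_0}^\infty W_1\sigma(MW_1)<\infty$. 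By \eqref{Wdyn} $\dot W$ is bounded, so $W_1\sigma(MW_1)$ is uniformly continuous and Barbalat's lemma gives $W_1\sigma(MW_1)\to0$, i.e. $W_1\to0$. Combined with $V\to L$ this forces $|W_2|\to\sqrt{2L}$, so the first coordinate of every limit point is $0$.

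\emph{Step 3 (convergence of $W_2$).} The hard part is to upgrade $|W_2|\to\sqrt{2L}$ into genuine convergence of $W_2$, i.e. to rule out oscillation between $+\sqrt{2L}$ and $-\sqrt{2L}$ when $L>0$. If $L=0$ then $W\to0$ and we are done with $\bar W_2=0$. If $L>0$, then $\Vert W\Vert$ is eventually bounded away from $0$, so for large $t$ I may write $W=\Vert W\Vert(\cos\phi,\sin\phi)^T$ with $\phi$ continuous, and $W_1\to0$ forces $\cos\phi\to0$. A continuity argument then closes the proof: the $\omega$-limit set of $\phi$ is contained in the discrete set $\frac{\pi}{2}+\pi\mathbb{Z}$, and a continuous $\phi$ cannot pass from a neighbourhood of one such point to a neighbourhood of a different one without crossing a value where $|\cos\phi|=1$, which is impossible once $\cos\phi$ is small. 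Hence $\phi$ converges to a single $\frac{\pi}{2}+k\pi$, so $\sin\phi\to\pm1$ and $W_2=\Vert W\Vert\sin\phi\to\pm\sqrt{2L}=:\bar W_2$. Therefore $W\to(0,\bar W_2)^T$, as claimed.
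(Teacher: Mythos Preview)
Your proof is correct and follows essentially the same route as the paper: derive the reduced dynamics $\dot W=-r_{re}A_1W-\frac{\rho}{\mu}\sigma(MW_1)e_1+p(t)$ with $p\in L^1$, exploit the skew-symmetry of $A_1$ in $V=\tfrac12\|W\|^2$ to obtain boundedness of $W$ and integrability of $W_1\sigma(MW_1)$, and then invoke Barbalat to get $W_1\to0$. Your Step~3 (the phase/connectedness argument showing that $W_2$ itself, rather than merely $|W_2|$, converges) supplies a detail that the paper passes over in a single sentence, so your treatment is in fact slightly more careful than the original at that point.
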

\begin{proof}
The dynamics of $W$ can be expressed as
\beq\label{dynamic_W}
\dot W &=&  - {r_{re}}{A_1}W + \frac{r_{re}}\mu{A_1}\left( {\begin{array}{c}
  {e_u} \\
  {e_v}
\end{array}} \right) + \sin \left( {{e_\psi }} \right){A_1}{D_\rho }\left( {\begin{array}{c}
  {u_{re}} \\
  {v_{re}}
\end{array}} \right) + {D_\rho }\left( {\begin{array}{{c}}
  {e_u} \\
  {e_v}
\end{array}} \right)+ O\left( {e_\psi ^2,\left| {e_\psi } \right|\left( {\begin{array}{c}
  {e_u} \\
  {e_v}
\end{array}} \right)} \right)\\ &&- {D_\rho }\left( {\begin{array}{c}
  {e_u} \\
  {e_v}
\end{array}} \right) - {r_{re}}\frac{A_1}{\mu }\left( {\begin{array}{c}
  {e_u} \\
  {e_v}
\end{array}} \right) - \frac{{{\rho }}}{\mu }\sigma(M W_1)\left( {\begin{array}{{c}}
  1 \\
  0
\end{array}} \right)+ O\left( {\left| {e_r} \right|\left( {1 + \left\| (e_u,e_v) \right\|} \right)} \right).
\eeq

\noindent In order to find the limsup of $\|W\|$, we calculate
\beq
{W^T}\dot W &=& \sin \left( {{e_\psi }} \right){W^T}{A_1}{D_\rho }\left( {\begin{array}{*{20}{c}}
  {{u_{re}}} \\
  {{v_{re}}}
\end{array}} \right) + {W^T}\left[ {O\left( {e_\psi ^2,\left| {{e_\psi }} \right|\left( {\begin{array}{*{20}{c}}
  {{e_u}} \\
  {{e_v}}
\end{array}} \right) + O\left( {\left| {{e_r}} \right|} \right)} \right)} \right] - \frac{{{\rho }}}{\mu }{W_1}\sigma(M W_1) , \\
&=&O\left( {\left\| W \right\|.\left\| {{e_\psi },{e_r}} \right\|} \right) - \frac{{{\rho }}}{\mu }{W_1}\sigma(M W_1),
\eeq
which implies that $|W^T \dot W| +\frac{\rho}{\mu}W_1\sigma_1(MW_1)\leqslant \|W\| O\left( \|e_\psi,e_r\|\right)$. 
One deduces first that the time derivative of $\|W \|$ is integrable over 
$\mathbb{R}_+$ and thus $W$ admits a limit as $t$ tends to infinity. Therefore, the right-hand side of the previous inequality is integrable over $\mathbb{R}_+$ implying the same conclusion for $W_1 \sigma(M W_1)$. As both $W_1$ and $\dot W_1$ are bounded, then according to Barbalat's Lemma, $W_1\rightarrow 0$ as $t\rightarrow \infty$. Consequently $W_2$ tends towards a finite value $\bar W_2$ as $t$ tends to infinity.
\end{proof}
Lemma \ref{bounded_W} permits us to further improve the result of Lemma \ref{conv_eu_ev}, as follows.
\begin{lemme}\label{lemme5}
If Lemmas \ref{conv_eu_ev} and \ref{bounded_W} hold true, then $e_u$ and $e_v$ converge to zero asymptotically.
\end{lemme}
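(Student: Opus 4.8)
The goal is to upgrade the ``convergence to a neighborhood'' conclusion of Lemma~\ref{conv_eu_ev} into genuine asymptotic convergence $e_u,e_v\to 0$, using the extra information provided by Lemma~\ref{bounded_W}, namely that $W_1\to 0$ and $W_2\to\bar W_2$. The plan is to exploit the definition $W=(e_x,e_y)^T+(e_u,e_v)^T/\mu$ together with the linearized (post-saturation) dynamics of $(e_u,e_v)$ in~\eqref{e_uv_sat} and the already-established exponential decay of $(e_\psi,e_r)$ from Lemma~\ref{lemme2}.

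Let me think about the structure. First I would record what is already known at this stage of the proof: by Lemma~\ref{lemme2}, $e_\psi,e_r\to 0$ exponentially, so every term carrying a factor $e_r$, $e_\psi$, $\sin(e_\psi)$ or $(\cos e_\psi-1)$ in $(S_e)$ decays to zero; by Lemma~\ref{conv_eu_ev}, $(e_u,e_v)$ is bounded; and by Lemma~\ref{bounded_W}, $W_1\to 0$ while $W_2\to\bar W_2$. Since the controller has exited saturation, $\sigma_1(\cdot)=\sigma(MW_1)=MW_1\to 0$. Substituting these facts into~\eqref{e_uv_sat}, the dynamics of $(e_u,e_v)$ reduce asymptotically to the stable linear homogeneous system $\dot e_u=-\mu\rho e_u+r_{re}e_v$, $\dot e_v=-\mu c\rho e_v-r_{re}e_u$, driven only by inputs that tend to zero (the terms $-\rho\sigma(MW_1)$, $e_rv_{re}-e_re_v$, $-e_ru_{re}+e_re_u$). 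The matrix $-\operatorname{diag}(\mu\rho,\mu c\rho)-r_{re}A_1$ is, up to the skew-symmetric $r_{re}A_1$, negative definite with rate $\min(\mu\rho,\mu c\rho)>0$, so the unforced system is uniformly exponentially stable.

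Concretely I would reuse the Lyapunov candidate $\tfrac12(e_u^2+e_v^2)$. Differentiating along~\eqref{e_uv_sat} and using that $A_1$ is skew-symmetric in the sense $e_u(r_{re}e_v)+e_v(-r_{re}e_u)=0$ (the $r_{re}A_1$ coupling cancels in the quadratic form), one obtains
\beq
e_u\dot e_u+e_v\dot e_v = -\mu\rho e_u^2-\mu c\rho e_v^2 -\rho e_u\sigma(MW_1)+e_r(e_uv_{re}-e_vu_{re}),
\eeq
so that $\tfrac{d}{dt}\tfrac12\|(e_u,e_v)\|^2\le -m_3\|(e_u,e_v)\|^2+\|(e_u,e_v)\|\cdot g(t)$, where $m_3:=\min(\mu\rho,\mu c\rho)$ and $g(t):=\rho|\sigma(MW_1)|+|e_r|(|v_{re}|+|u_{re}|)\to 0$. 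This is a stable scalar comparison inequality for $\|(e_u,e_v)\|$ with a vanishing forcing term $g(t)$; a standard input-to-state / variation-of-constants argument then forces $\limsup_{t\to\infty}\|(e_u,e_v)\|\le \limsup_{t\to\infty} g(t)/m_3=0$, giving $e_u,e_v\to 0$.

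The main obstacle I anticipate is justifying rigorously that the forcing $g(t)$ vanishes \emph{and} that this implies $\|(e_u,e_v)\|\to 0$ rather than merely staying bounded. The decay of $|e_r|$ is exponential and $u_{re},v_{re}$ are bounded by Assumption~\ref{ass1}/Remark~\ref{limsup_uvr_ref}, so $|e_r|(|u_{re}|+|v_{re}|)\to 0$; the subtle point is the term $\rho\sigma(MW_1)$, whose decay relies on $W_1\to 0$ from Lemma~\ref{bounded_W}, which in turn already presupposes convergence of $W$---so one must be careful not to argue circularly, and should treat $g(t)$ purely as a known vanishing signal rather than re-deriving its limit from the $(e_u,e_v)$ dynamics. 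Once $g$ is established as an external vanishing input to the exponentially stable $(e_u,e_v)$ subsystem, the convergence conclusion is routine, and the cancellation of the skew part ensures the decay rate $m_3$ is genuinely positive and independent of $r_{re}$.
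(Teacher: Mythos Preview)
Your proposal is correct and follows essentially the same route as the paper: both use the quadratic Lyapunov function $G=\tfrac12(e_u^2+e_v^2)$, derive a differential inequality of the form $\dot G+2mG\le h(t)$ with a forcing term that vanishes because $W_1\to 0$ (Lemma~\ref{bounded_W}) and $e_r\to 0$ (Lemma~\ref{lemme2}), and conclude $G\to 0$ by a comparison/ISS argument. The paper simply reuses inequality~\eqref{label_lemma4} from Lemma~\ref{conv_eu_ev} with $\sigma_1=\sigma(MW_1)\to 0$, whereas you recompute the derivative from the post-saturation dynamics~\eqref{e_uv_sat}, obtaining the rate $m_3=\min(\mu\rho,\mu c\rho)$ instead of $m_2$; these are equivalent since $\mu\rho=a_1+\xi$ and $\mu c\rho=b_1$. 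One small inaccuracy: when you write ``the controller has exited saturation, $\sigma(MW_1)=MW_1$,'' the saturation that was shown to exit after Lemma~\ref{conv_eu_ev} is $\sigma(\xi e_u/U_1)$, not $\sigma(MW_1)$; but this is harmless, since all you actually need is $\sigma(MW_1)\to 0$, which follows directly from $W_1\to 0$.
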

\begin{proof}
From Lemma \ref{conv_eu_ev} and setting $G(e_u,e_v):=(e_u^2+e_v^2)/2$, Equation \eqref{label_lemma4} is rewritten as
$\dot G +2m_2 G\leq \frac{\rho^2\sigma_1^2(MW_1)}{\tilde a}$.
One concludes using Barbalat's Lemma. 
\end{proof}
So far, we have established that the errors $e_\psi$, $e_r$, $e_u$ and $e_v$ converge to zero. From Lemmas \ref{bounded_W} and \ref{lemme5}, we deduce that if $W_1 \rightarrow 0$ and $e_u \rightarrow 0$, then $e_x $ will converge asymptotically to zero as well. We next address the convergence of the remaining error variable, $e_y$.
\begin{lemme}\label{lemmeW}
If Assumption \ref{ass1} is satisfied, then $\bar W_2=0$ and $e_y$ converges asymptotically to zero.
\end{lemme}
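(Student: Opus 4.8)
The plan is to reduce the claim to the single statement $\bar W_2=0$ and then to exploit the rotational structure of the $W$--dynamics together with Assumption~\ref{ass1}. Since $W=(e_x,e_y)^T+(e_u,e_v)^T/\mu$, Lemma~\ref{bounded_W} gives $W_2\to\bar W_2$ while Lemma~\ref{lemme5} gives $e_v\to 0$, so $e_y=W_2-e_v/\mu\to\bar W_2$. Hence it suffices to prove $\bar W_2=0$, which will simultaneously yield $e_y\to 0$.

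First I would revisit the $W$--dynamics displayed in the proof of Lemma~\ref{bounded_W} and record the asymptotics of each term. Using Lemma~\ref{lemme2} ($e_\psi,e_r\to 0$ exponentially), Lemma~\ref{lemme5} ($e_u,e_v\to 0$) and $W_1\to 0$, the two $\pm D_\rho(e_u,e_v)^T$ and the two $\pm r_{re}A_1(e_u,e_v)^T/\mu$ terms cancel, and every surviving term other than $-r_{re}A_1W$ tends to zero. Writing $\dot W=-r_{re}A_1W+h(t)$, the remainder $h$ is the sum of a term $O(\sin e_\psi)$ and the $O(\cdot)$ contributions, all integrable over $\mathbb{R}_+$ since they are dominated by the exponentially decaying $e_\psi,e_r$, plus the feedback contribution $-(\rho/\mu)\sigma(MW_1)(1,0)^T$. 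A direct computation with $A_c$ and $D_\rho$ gives $A_1=\bigl(\begin{smallmatrix}0&-1\\1&0\end{smallmatrix}\bigr)$, so $A_1$ generates planar rotations and $\exp(A_1\theta)=R_\theta$.

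The core of the argument is to pass to the rotating frame $\zeta:=R_{\psi_{re}}W=\exp(A_1\psi_{re})W$. Since $\dot\psi_{re}=r_{re}$ and $A_1$ commutes with $R_{\psi_{re}}$, the rotation term cancels and one obtains $\dot\zeta=R_{\psi_{re}}h(t)$. I would then argue that $\zeta$ converges to a finite limit $\zeta_\infty$ because $R_{\psi_{re}}h$ is integrable over $\mathbb{R}_+$. On the other hand $\|R_{\psi_{re}}\|=1$ and $W\to\bar W=(0,\bar W_2)^T$ force $\zeta(t)-R_{\psi_{re}(t)}\bar W\to 0$, that is $R_{\psi_{re}}\bar W\to\zeta_\infty$. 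As $R_{\psi_{re}}\bar W=\bar W_2(-\sin\psi_{re},\cos\psi_{re})^T$, convergence of this vector with $\bar W_2\neq 0$ would make $e^{i\psi_{re}}$ converge and hence, by continuity of $\psi_{re}$, force $\psi_{re}$ to converge to a finite limit, contradicting Assumption~\ref{ass1}. Therefore $\bar W_2=0$ and $e_y\to 0$.

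The main obstacle is precisely the integrability of $\dot\zeta=R_{\psi_{re}}h$. The exponentially decaying pieces of $h$ are harmless, but the feedback term $-(\rho/\mu)\sigma(MW_1)(1,0)^T$ only benefits from the bare convergence $W_1\to 0$; the integrability of $W_1\sigma(MW_1)$ established inside the proof of Lemma~\ref{bounded_W} shows merely $W_1\in L^2$, and the delicate point is to upgrade this into convergence of $\int_0^\infty\sigma(MW_1)(\cos\psi_{re},\sin\psi_{re})^T\,dt$ using the persistent rotation of the direction $(\cos\psi_{re},\sin\psi_{re})$ granted by Assumption~\ref{ass1}. Conceptually this is a persistency-of-excitation effect: in the frame $\zeta$ the damping acts only along the rotating direction, so the non-convergence of $\psi_{re}$ is exactly what prevents a nonzero $\bar W$ from surviving. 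I expect this verification to be the crux of the proof.
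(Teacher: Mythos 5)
Your argument is, step for step, the paper's own proof: reduce to $\bar W_2=0$ via $e_y=W_2-e_v/\mu$, rewrite the $W$--dynamics from the proof of Lemma~\ref{bounded_W} as $\dot W=-r_{re}A_1W+h$, pass to the rotating frame $\tilde W:=R_{\psi_{re}}W$ (your $\zeta$), use $\dot\psi_{re}=r_{re}$ and the commutation $A_1R_{\psi_{re}}=R_{\psi_{re}}A_1$ to cancel the rotation term, deduce that $\tilde W$ converges from integrability of $\dot{\tilde W}$, and conclude that convergence of $\bar W_2\left(-\sin\psi_{re},\cos\psi_{re}\right)^T$ with $\bar W_2\neq 0$ would force $\psi_{re}$ to converge, contradicting Assumption~\ref{ass1}. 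Your computation $A_1=\bigl(\begin{smallmatrix}0&-1\\1&0\end{smallmatrix}\bigr)$ is also correct, so the frame change is exactly the paper's $\tilde W$.

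However, your proposal stops short of a proof at precisely the point you flag, and that point is a genuine gap. What Lemma~\ref{bounded_W} supplies is integrability of $W_1\sigma(MW_1)$; since $\sigma(MW_1)=MW_1$ once $W_1\to 0$ has brought the argument into the linear zone, this is only an $L^2$--type bound on $W_1$, and it does not imply $\sigma(MW_1)\in L^1$, which is what convergence of $\tilde W$ requires. Nor can Assumption~\ref{ass1} be expected to supply the persistency-of-excitation effect you invoke: ``$\psi_{re}$ does not converge'' is compatible with arbitrarily slow drift (e.g.\ $\psi_{re}\sim\sin(\log t)$), so no quantitative excitation is available to make $\int_0^\infty\sigma(MW_1)\left(\cos\psi_{re},\sin\psi_{re}\right)^T dt$ converge. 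You should also know that the paper does not close this gap either: its proof writes the remainder as $O\left(|e_\psi|,|e_r|,W_1\sigma(MW_1)\right)$ and then invokes integrability of $W_1\sigma(MW_1)$, whereas the term actually carried over from the $W$--dynamics is $-(\rho/\mu)\,\sigma(MW_1)\,(1,0)^T$, which is only $O\left(\sigma(MW_1)\right)$ and cannot be absorbed into $O\left(W_1\sigma(MW_1)\right)$ as $W_1\to 0$. So your diagnosis of the crux is accurate and, in fact, exposes the weak point of the published argument; but since neither you nor the paper verifies that step, your proposal as it stands is incomplete there, and the missing verification does not follow from the estimates available in the paper.
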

\begin{proof}
From Equation \eqref{dynamic_W} in Lemma \ref{bounded_W}, the dynamics of $W$ can be expressed as follows, $\dot W =  - {r_{re}}{A_1}W + O\left( {\left| {{e_\psi }} \right|,\left| {{e_r}} \right|,{W_1}\sigma \left( {M{W_1}} \right)} \right)$.
We define the new variable $\tilde W$ as follows, $\tilde W:=R_{\psi _{re}}W$,
and the dynamics of $\tilde W$ is given by
\beq
\dot {\tilde W}= {{\dot \psi }_{re}}{A_1}{R_{{\psi _{re}}}}W - {r_{re}}{R_{{\psi _{re}}}}{A_1}W + {R_{{\psi _{re}}}}O\left( {\left| {{e_\psi }} \right|,\left| {{e_r}} \right|,{W_1}\sigma \left( {M{W_1}} \right)} \right)=O\left( {\left| {{e_\psi }} \right|,\left| {{e_r}} \right|,{W_1}\sigma \left( {M{W_1}} \right)} \right).
%
\eeq
Since  $e_\psi$ and $e_r$ converge exponentially  to zero and $W_1 \sigma(M W_1)$ is integrable over $\mathbb{R}_+$, then $\left\| \dot \tilde W\right\|$ is also integrable over $\mathbb{R}_+$, which means that $\tilde W$ converges to a finite limit $W^l$. Then, one gets that $-W_2\sin \left( \psi _{re} \right)$ and $W_2\cos \left( \psi _{re} \right)$ tend to 
$W^l_1$ and $W^l_2$ respectively, as $t$ tends to infinity. If $\bar W_2 \ne 0$, then one easily shows that $\psi_{re}$ converges to a finite limit by considering whether $\left( W_{re}\right)_2 = 0 $ or not. That contradicts Assumption 2 and $W$ must converge asymptotically to as well as $e_y$.
\end{proof}
It should be noted that the controller presented in Theorem 1 has been designed under the assumption that all state variables are known. In the next section, the study is extended to the case where only the position and orientation states of the vessel are available and the velocities need to be observed.
\section{Tracking without velocity measurement}
\noindent In practical cases, only position and orientation feedbacks are available for navigation. Therefore the only available states of the vessel are $x,\ y,\ \psi$ along with the the complete coordinate state set of the virtual vessel to be followed. For such output feedback systems, the variables $u,v,r$ need to be observed. In this section, we will show that the controller presented in Theorem \ref{main_theo} is applicable in this case and the use of observation instead of measurement does not affect the stability.
We suppose that the velocities are obtained through an observer such as that presented by Fossen and Strand in \cite{Fossen_observer}, or a robust differentiator such as that presented in \cite{chitour_diff}. In both cases, observation errors converge exponentially to zero. It can be noted that, when we use a differentiator, the estimated values $(\hat u,\hat v, \hat r )$ of $(u,v,r)$, can be determined according to the following equation $( \hat u,\hat v)^T=D_\rho^{-1} R_{-\psi}(\hat {\dot x},\hat {\dot y} )^T$ and $\hat r =  \hat{\dot \psi}$,
where $(\hat {\dot x}, \hat {\dot y}, \hat {\dot \psi})$ are the estimated values of $( {\dot x},  {\dot y} , {\dot \psi})$ respectively.

Let us follow the same steps used in the demonstration of stability of system $(S_e)$ with velocity measurement. The observation error related to the velocity are define as below:
$f_u = u - \hat u$, $f_v = v - \hat v$ and $f_r = r - \hat r$.
As the references are common, the observation errors can be described in terms of trajectory pursuit errors as $f_u = e_u - \hat e_u$, $f_v = e_v - \hat e_v$, and $f_r = e_r - \hat e_r$,
where, $\hat e_u = \hat u - u_{re}$, $\hat e_v = \hat v - v_{re}$ and $\hat e_r = \hat r - r_{re}$.
We note that the variable $x, y, \psi$ are measured and the related observation errors are null.
The problem is transformed to demonstrate the stability of the error system $(S_e)$ under control laws $w_1$ and $\tilde {w}_2$, which are now based on the observed values.\\
%
As in the previous case, we define
$w_2:=\beta \left( \hat u \hat v - u_{re} v_{re} \right) + \tilde w_2$.
Then the result of this section can be stated as the following theorem.
\begin{montheo}\label{main_theo_obs}
 If Assumption \ref{ass1} and Condition \textbf{C1} are fulfilled, then for an appropriate choice of constants $U_1,\ \rho,\ \xi,\ M,\ U_2,\ k_1,\ k_2,\ \mu$, the following controller ensures global asymptotic stability of the tracking error system $(S_e)$:
\beqnum
	w_1 =  - U_1 \sigma \left(  \frac{\xi \hat e_u}{U_1}\right) - \rho \sigma \left( M(e_x + \frac1{\mu} \hat e_u) \right),
	w_2 =  - U_2 \sigma \left(  \frac{k_1}{U_2} e_\psi +\frac{k_2 - 1}{U_2} \hat e_r \right),
\eeqnum
if in addition  the observer errors $f_u$, $f_v$ and $f_r$ converge asymptotically to zero and are integrable over $\mathbb{R}_+$ (i.e., the integrals of their norms over $\mathbb{R}_+$ are finite).
\end{montheo}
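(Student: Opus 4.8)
The plan is to show that Theorem~\ref{main_theo_obs} follows from Theorem~\ref{main_theo} by treating the observation errors $f_u$, $f_v$, $f_r$ as vanishing, integrable perturbations that enter the closed-loop error dynamics additively, and then re-running each of the Lemmas \ref{lemme2}--\ref{lemmeW} with these extra terms present. The key structural observation is that the controller of Theorem~\ref{main_theo_obs} is obtained from that of Theorem~\ref{main_theo} simply by replacing $e_u$ by $\hat e_u=e_u-f_u$ and $e_r$ by $\hat e_r=e_r-f_r$ in the saturation arguments; since $\sigma$ is globally Lipschitz with constant $1$, each such replacement produces a discrepancy bounded by a constant times $|f_u|$ or $|f_r|$. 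Hence the true closed-loop system is the system $(S_e)$ driven by the nominal controls $w_1,w_2$ of Theorem~\ref{main_theo} \emph{plus} perturbation terms that are $O(|f_u|,|f_v|,|f_r|)$, where by hypothesis each $f$ converges to zero and is integrable over $\mathbb{R}_+$.

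First I would redo Lemma~\ref{lemme2}: the $(e_\psi,e_r)$ subsystem now reads $\dot e_\psi=e_r$, $\dot e_r=-e_r-U_2\sigma\!\left(\frac{k_1}{U_2}e_\psi+\frac{k_2-1}{U_2}(e_r-f_r)\right)+\beta(\hat u\hat v-uv)$, and both the saturation-argument shift and the $\beta$ term are $O(|f_u|,|f_v|,|f_r|)$. Using the same Lyapunov function $V$, the derivative becomes $\dot V=-\alpha e_r^2-(k_2-1)\sigma^2(z)+O(|f_r|,|f_u|,|f_v|)$; since the perturbation is integrable, $V$ still converges and one again reaches the linear region, after which $(e_\psi,e_r)$ satisfies a stable linear system forced by an integrable, vanishing input, so $e_\psi,e_r\to0$ and remain integrable by the variation-of-constants formula. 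Next I would propagate this through Lemma~\ref{conv_eu_ev}: the bound \eqref{label_lemma4} picks up an additional $O(|f_u|,|f_v|,|f_r|)$ term on the right, which does not affect the $\limsup$ conclusion because the perturbation vanishes, so the controller still exits saturation in finite time and the dynamics linearise as in \eqref{e_uv_sat} up to integrable perturbations.

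The remaining steps are Lemmas \ref{bounded_W}, \ref{lemme5} and \ref{lemmeW}, which I would re-examine in the same spirit. In each, the only properties used of the error signals were that $e_\psi,e_r$ decay and that the residual driving terms are integrable over $\mathbb{R}_+$; since $f_u,f_v,f_r$ are integrable and vanishing by hypothesis, adding them to the right-hand sides preserves integrability, so the Barbalat arguments for $W_1\to0$, for $(e_u,e_v)\to0$, and for $e_y\to0$ go through verbatim. The $\tilde W$ computation in Lemma~\ref{lemmeW} again yields $\|\dot{\tilde W}\|$ integrable, so $\tilde W$ converges and the Assumption~\ref{ass1} contradiction argument forcing $\bar W_2=0$ is unchanged.

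The main obstacle I anticipate is making the integrability bookkeeping fully rigorous rather than merely formal: one must verify that replacing a bounded state by its observed version inside $\sigma$ genuinely yields a perturbation bounded by $|f|$ (this is where global Lipschitzness of $\sigma$ and the a~priori boundedness from Lemma~\ref{limsup_uvr} are essential), and that each ``$O(\cdot)$'' term is dominated by an $L^1(\mathbb{R}_+)$ function \emph{uniformly}, so that convergence of the Lyapunov functions is not destroyed. A secondary subtlety is that the estimated velocities $\hat u,\hat v,\hat r$ must themselves remain bounded for the $\beta(\hat u\hat v-u_{re}v_{re})$ cancellation to stay within the saturation budget of Condition~\textbf{C1}; this follows because $\hat u=u-f_u$ etc.\ with $f$ bounded, but it should be stated explicitly so that $w_2$ remains admissible throughout.
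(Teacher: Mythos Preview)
Your proposal is correct and follows essentially the same route as the paper: the paper also reruns Lemmas~\ref{lemme2}--\ref{lemmeW} with the observer errors treated as vanishing perturbations inside the saturation arguments, using the Lipschitz-type inequalities $|\sigma(z)-\sigma(z+f_1)|\le |f_1|$, $\sigma(z)\sigma(z+f_1)\ge\sigma^2(z)-|f_1|$ and $x\sigma(x+f)\ge x\sigma(x)-|x\sigma(f)|$ in place of your more global ``$\sigma$ is $1$-Lipschitz'' formulation. Your attention to the integrability of $(e_\psi,e_r)$ in the linear regime and to the boundedness of $(\hat u,\hat v)$ needed for Condition~\textbf{C1} is in fact more explicit than the paper's own treatment, which simply asserts that the remaining lemmas ``hold true'' after the perturbed versions of Lemmas~\ref{lemme2} and~\ref{conv_eu_ev} are established.
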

\begin{rem}
The choice of constants $U_1,\ \rho,\ \xi,\ M,\ U_2,\ k_1,\ k_2,\ \mu$ remain the same as in the case of Theorem \ref{main_theo}, therefore their expressions and conditions will not be repeated in this section.
\end{rem}
\begin{proof}
The proof of Theorem \ref{main_theo_obs} is largely based upon the proof of Theorem \ref{main_theo}, and is developed similarly. We first consider the dynamics of error variables  $e_\psi$ and $e_r$:
\beqnum \label{integ_perturbed}
		\dot e_\psi = e_r,\quad
		\dot e_r= - e_r - U_2 \sigma \left( \frac{k_1}{U_2} e_\psi + \frac{k_2 - 1}{U_2} e_r  + f_1(t)\right) + f_2(t),
\eeqnum
where $f_1(t)=-\frac{k_2 - 1}{U_2} f_r$ and $f_2(t)=\beta( uv - \hat u \hat v )$. 
\begin{lemme}\label{lemme8_erpsi}
If $f_1(t)$ and $f_2(t)$ converge to zero asymptotically, then for some large positive constants $k_1$ and $k_2$, System \eqref{integ_perturbed} is globally asymptotically stable.
\end{lemme}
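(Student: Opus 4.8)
The plan is to regard \eqref{integ_perturbed} as a perturbed copy of the nominal subsystem of Lemma~\ref{lemme2}, and to reuse the Lyapunov function $V$ of \eqref{Lyapunov1} with $z:=\frac{k_1}{U_2}e_\psi+\frac{k_2-1}{U_2}e_r$ and $\alpha=\frac{k_1-k_2+1}{U_2^2}>0$; note that the perturbations enter only as a shift $f_1$ inside the saturation and an additive term $f_2$ in $\dot e_r$. First I would observe that $e_r$ is bounded on $\mathbb{R}_+$: since $|\sigma|\le1$ and $f_2$ is bounded (being convergent), the comparison argument of Lemma~\ref{limsup_uvr} applied to $\dot e_r=-e_r-U_2\sigma(\cdot)+f_2$ gives $\limsup_{t\to\infty}|e_r|\le U_2+\sup_t|f_2(t)|$. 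This a priori bound is what allows the perturbation-induced cross terms to be controlled.

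Next I would differentiate $V$ along \eqref{integ_perturbed}. In the unperturbed case the cross terms cancel and $\dot V=-\alpha e_r^2-(k_2-1)\sigma^2(z)$; here the extra contributions come from replacing $\sigma(z)$ by $\sigma(z+f_1)$ and from the term $f_2$. Using the global Lipschitz bound $|\sigma(z+f_1)-\sigma(z)|\le|f_1|$ together with $|\sigma|\le1$, each extra term is a product of $|e_r|$ or $|\sigma(z)|$ with $|f_1|$ or $|f_2|$, and Young's inequality absorbs them into the two negative terms. This yields an estimate of the form
\beq
\dot V &\le& -\frac{\alpha}{2}e_r^2-\frac{k_2-1}{2}\sigma^2(z)+\delta(t),
\eeq
where $\delta(t)$ is a nonnegative combination of $|f_1|^2$, $|f_2|^2$ and $|f_2|$, so that $\delta(t)\to0$ as $t\to\infty$.

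From here I would proceed in two stages. Fixing $\epsilon<\frac{k_2-1}{2}$ and $T$ with $\delta(t)<\epsilon$ for $t\ge T$, and using $\sigma^2(z)=1$ for $|z|\ge1$, the estimate gives $\dot V<0$ whenever $|z|\ge1$; combined with the bound on $e_r$ this confines the trajectory to a fixed compact set for $t\ge T$, i.e. $(e_\psi,e_r)$ is globally bounded. I would then invoke the theory of asymptotically autonomous systems: because $\delta$ (hence $f_1,f_2$) vanishes and the trajectory is bounded, its nonempty compact $\omega$-limit set is invariant under the nominal flow of Lemma~\ref{lemme2}; LaSalle's invariance principle applied to that flow, whose largest invariant subset of $\{\dot V=0\}=\{e_r=0,\ z=0\}$ is the origin, forces the $\omega$-limit set to be $\{0\}$. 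Hence $(e_\psi,e_r)\to0$, while the local exponential stability of the nominal linear dynamics supplies Lyapunov stability, giving the asserted global asymptotic stability.

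The step I expect to be the main obstacle is the global boundedness in the first stage: because the saturation caps the Lyapunov decay rate, the estimate for $\dot V$ is negative definite but not radially unbounded in $z$, so boundedness cannot be read off a sublevel-set argument directly and must instead be extracted from the mechanism $|z|\ge1\Rightarrow\dot V<0$. Making this robust against the persistent, though vanishing, perturbation is exactly where the freedom to take $k_1$ and $k_2$ large---granted by the statement---is needed, so that the damping $-(k_2-1)\sigma^2(z)$ dominates the perturbation-driven growth of $z$ before the limiting autonomous analysis is applied.
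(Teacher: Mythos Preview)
Your approach is essentially the same as the paper's: both reuse the Lyapunov function $V$ of \eqref{Lyapunov1} with $z=\frac{k_1}{U_2}e_\psi+\frac{k_2-1}{U_2}e_r$ and $\alpha=\frac{k_1-k_2+1}{U_2^2}$, exploit the Lipschitz bound $|\sigma(z+f_1)-\sigma(z)|\le|f_1|$ to handle the shifted saturation, and arrive at an inequality of the form $\dot V\le-\frac{\alpha}{2}e_r^2-c\,\sigma^2(z)+\delta(t)$ with $\delta(t)\to0$. The paper uses the slightly different splitting $\sigma(z)\sigma(z+f_1)\ge\sigma^2(z)-|f_1|$ (yielding a raw $|f_1|$ term rather than $|f_1|^2$) and then simply asserts $\limsup|e_r|=\limsup|\sigma(z)|=0$; your two-stage route via boundedness of $e_r$, the mechanism $|z|\ge1\Rightarrow\dot V<0$, and the asymptotically autonomous/LaSalle argument is a more carefully justified version of that same last step, not a different strategy.
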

\begin{proof}
Consider the Lyapunov function $V$ defined in \eqref{Lyapunov1}. If $z:=\frac{k_1}{U_2} e_\psi + \frac{k_2 - 1}{U_2} e_r$, one gets 
	 \beqnum
	 		\dot V &=& \alpha e_r \dot e_r + \sigma\left( \frac{k_1}{U_2} e_\psi + \frac{k_2 - 1}{U_2} e_r  \right) \left[ \frac{k_1}{U_2} \dot e_\psi +  \frac{k_2 - 1}{U_2} \dot e_r \right] , \\
	 		&=& \alpha e_r \left( - e_r - U_2 \sigma (z + f_1)  + f_2 \right) + \sigma (z) \left[ \frac{k_1}{U_2} e_r + \frac{k_2 - 1}{ U_2} \left( - e_r - U_2 \sigma( z + f_1) + f_2\right) \right].\\
	 \eeqnum
Using the inequality, $|ab| \le \frac{a^2 + b^2}{2}$, and taking $\alpha U_2 = \frac{k_1 - k_2 + 1}{U_2} > 0$, we get
	 \beq
	 		\dot V &\le& - \frac{\alpha}{2} e_r^2 - \left(k_2 - 1 \right) \sigma \left( z \right) \sigma \left(z + f_1 \right)
	 		+ \alpha U_2 e_r \left(  \sigma \left( z \right) - \sigma \left(z + f_1 \right) \right)
	 		+\frac{\alpha}{2} f_2^2  + \frac{k_2 - 1}{U_2} \left| f_2 \right| .
	 \eeq
Using the inequalities $\left| \sigma \left( z \right) - \sigma \left(z + f_1 \right) \right|\leq  \left| f_1 \right|$ and $\sigma \left( z \right) \sigma \left(z + f_1 \right)\geq \sigma ^ 2 \left( z \right) -\left| f_1 \right |$, one has 
$$
\dot V \le - \frac{\alpha}{2} e_r^2 - \left(k_2 - 1 \right) \sigma^2 \left( z \right) +  \left(k_2 - 1 \right)\left| f_1 \right| + \alpha U_2 \left| e_r \right| \left| f_1 \right|
	 		+\frac{\alpha}{2} f_2^2  + \frac{k_2 - 1}{U_2} \left| f_2 \right|.
$$
After a sufficiently large time interval $T$, it is assured that $|U_2 f_1| < \frac1{6}$ $\forall t > T$, and
	 \beqnum
	 		\dot V &\le& -\frac{\alpha}{3} e_r^2 - (k_2 - 1) \sigma^2 \left( z \right) + O\left(f_1^2,\left|f_2\right|, f_2^2  \right).
	 \eeqnum
From here, we obtain that 
$\mathop {\lim \sup }\limits_{t \to \infty } \left| e_r \right| = \mathop {\lim \sup }\limits_{t \to \infty } \left| \sigma(z) \right| = 0$.
\end{proof}
Following the same steps as used in the previous section, we now demonstrate the convergence of the error variables ($e_u$,$e_v$,$e_x$,$e_y$) of System $(S_e)$.\begin{lemme}\label{conv_eu_ev_o}
Consider the dynamics of $e_u$ and $e_v$ presented in Equation \eqref{sys_error}. Then, the control
\beqnum\label{control_w1 + f}
w_1 = -U_1 \sigma \left(\frac{\xi \hat e_u}{U_1}\right) - \rho \sigma_1 (.)
\eeqnum
ensures that $e_u$ and $e_v$ are bounded and again verifiy the estimate \eqref{toto_w1}.
\end{lemme}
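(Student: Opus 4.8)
The plan is to replay the proof of Lemma~\ref{conv_eu_ev} almost verbatim, treating the mismatch between $\hat e_u$ and $e_u$ inside the first saturation term as one more vanishing perturbation. First I would form the same energy balance from \eqref{sys_error}, which, exactly as in Lemma~\ref{conv_eu_ev}, reads $e_u\dot e_u+e_v\dot e_v=-a_1e_u^2-b_1e_v^2+e_r(e_uv_{re}-e_vu_{re})+e_uw_1$. Inserting the observer-based control $w_1=-U_1\sigma(\xi\hat e_u/U_1)-\rho\sigma_1(.)$, the only genuinely new object is the term $-U_1e_u\sigma(\xi\hat e_u/U_1)$. Writing $\hat e_u=e_u-f_u$ and adding and subtracting $U_1e_u\sigma(\xi e_u/U_1)$, I would split it into the ideal term $-U_1e_u\sigma(\xi e_u/U_1)$ plus a remainder $U_1e_u[\sigma(\xi e_u/U_1)-\sigma(\xi\hat e_u/U_1)]$; since $\sigma$ is $1$-Lipschitz, the remainder is bounded by $|\xi|\,|e_u|\,|f_u|$.

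Collecting terms therefore yields precisely the inequality of Lemma~\ref{conv_eu_ev}, namely $e_u\dot e_u+e_v\dot e_v\le-a_1e_u^2-b_1e_v^2-U_1e_u\sigma(\xi e_u/U_1)-\rho e_u\sigma_1(.)+C_0|e_r|\,\|(e_u,e_v)\|$, perturbed by the extra summand $|\xi|\,|e_u|\,|f_u|$. By Lemma~\ref{lemme8_erpsi} one has $e_r\to0$, and $f_u\to0$ by hypothesis, so the coefficient $\eta(t):=C_0|e_r|+|\xi|\,|f_u|$ tends to zero. Recombining $-a_1e_u^2$ with $-U_1e_u\sigma(\xi e_u/U_1)=-\xi s(t)e_u^2$ into $-[a_1+\xi s(t)]e_u^2$ and using $a_1+\xi s(t)\ge\tilde a$ exactly as before, I would arrive, with $G:=(e_u^2+e_v^2)/2$, at $\dot G\le-2m_2G+\tfrac{\rho^2\sigma_1^2(\cdot)}{\tilde a}+\eta(t)\sqrt{2G}$, i.e.\ estimate \eqref{label_lemma4} perturbed by the vanishing drift $\eta(t)\sqrt{2G}$.

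It then remains to check that this perturbation destroys neither boundedness nor the limsup. I would absorb $\eta(t)\sqrt{2G}$ into the dissipation whenever $G$ is large: since $\eta(t)\to0$, for every $\varepsilon>0$ there is $T$ with $\eta(t)<\varepsilon$ for $t>T$, so $\dot G<0$ as soon as $G$ exceeds a threshold that tends to $\rho^2/(2m_2\tilde a)$ as $\varepsilon\to0$. This simultaneously gives the global boundedness of $(e_u,e_v)$ and the estimate \eqref{toto_w1}. The one point demanding care — and the main obstacle — is that the new term $|\xi|\,|e_u|\,|f_u|$ is only \emph{linear} in $|e_u|$, hence cannot be dominated by the quadratic dissipation uniformly in time; it is precisely the asymptotic decay of the observer error $f_u$ that makes the absorption legitimate and leaves the limsup bound identical to the full-state case. $\hfill\square$
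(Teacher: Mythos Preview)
Your argument is correct and follows essentially the same route as the paper: rewrite $w_1=-U_1\sigma(\xi e_u/U_1+f)-\rho\sigma_1(\cdot)$ with $f=-\tfrac{\xi}{U_1}f_u$, isolate the ideal term $-U_1e_u\sigma(\xi e_u/U_1)$, bound the residual by a quantity of order $|e_u||f_u|$, and then replay Lemma~\ref{conv_eu_ev} with this extra vanishing perturbation alongside the $e_r$-term. The only cosmetic difference is the inequality used to control the residual: you invoke the $1$-Lipschitz property of $\sigma$, while the paper quotes $x\sigma(x+f)\ge x\sigma(x)-|x\sigma(f)|$; both produce the same bound $|\xi|\,|e_u|\,|f_u|$ and lead to the perturbed version of \eqref{label_lemma4} and the identical limsup conclusion.
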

\begin{proof}
The argument exactly follows the line of the argument of Lemma \ref{conv_eu_ev} with 
the controller $w_1$ written as $w_1 := -U_1 \sigma \left(\frac{\xi e_u}{U_1} + f\right) - \rho \sigma_1 (.)$, where $f:= -\frac{\xi}{U_1} f_u$ converges to zero asymptotically and is integrable over $\mathbb{R}_+$, and by using  the inequality $ x \sigma(x + f) \geq x \sigma(x) - | x \sigma(f)|$.
\end{proof}
Similarly, the proof of convergence of $W_1$ to zero and $W_2$ to a finite limit, presented in Lemma \ref{lemmeW}, holds true and one concludes by showing that the limit of $W_2$  is zero as well.
\end{proof}
\section{Simulations}
The performance of the presented controller is illustrated by simulation. We apply the controller on an example of a monohull vessel, as considered in \cite{Do2002}. The length of this vessel is $32$ m, and a mass of $118 \times 10^3$ kg.
The parameters of the damping matrices as given as follow:
\beqnum
\begin{array}{ccccccc}
	d_1 &=& 215 \times 10^2 Kgs^{-1},\quad d_2 &=& 97 \times 10^3 Kgs^{-1},\quad d_3 &=& 802 \times 10^4 Kg m^2 s^{-1},\\
	m_1 &=& 120 \times 10^3 Kg,\quad m_2 &=& 172.9 \times 10^3 Kg,\quad m_3 &=& 636 \times 10^5 Kg m^2.\\
\end{array}
\eeqnum

Based on these physical parameters, we find the parameters of System \eqref{system} as
\beqnum
a= 0.179,\ b = 0.561,\ c = 0.694, \ \beta = 0.126 , \kappa = 8.32 \times 10^{-4}.
\eeqnum

Then, the parameters of the controller and the normalized system $(S)$ are given by
\beq
	a_1 = 1.421, \ b_1 = 4.449, \  d = 0.126,\
	k_1 = 10, \ k_2 = 10,\ U_2 = 0.1, 
	\ U_1 = \frac{a_1}{2}, \ \rho = \frac{a_1}{4}, \ M = 0.1 .
\eeq
The reference trajectory is generated by considering the surge force and the yaw moment as constants $\tau_{1,re} = 10$ and $\tau_{2,re} = 0.05$ with the initial values
\beq
	\left(x_{re}(0),y_{re}(0),\psi_{re}(0),u_{re}(0),v_{re}(0),r_{re}(0)\right) = \left(0 \ m,0 \ m,0 \ rad,\ 0 ms^{-1},0 \ ms^{-1},0 \  rads^{-1}\right)).
\eeq
The initial conditions of the vessel are as follows:
\beq
	\left(x(0),y_{re}(0),\psi(0),u(0),v(0),r(0)\right) = \left(50 \ m,-150 \ m,\frac{\pi}{4} \ rad,\ 50 ms^{-1},0 \ ms^{-1},0 \  rads^{-1}\right).
\eeq
The reference trajectory and the vessel are shown in a 2D coordinate plane in Figure \ref{plan}.The vessel converges to the reference trajectory asymptotically and similarly for the position errors graph in Figure \ref{exy}. The orientation error and its derivative also converge to zero, as seen in Figure \ref{epsir}. The convergence of $e_u$ and $e_v$ is shown in Figure \ref{euv}. Figures \ref{tau1} and \ref{tau2} show the control signals $\tau_1$ and $\tau_2$ respectively and the controllers are clearly bounded. This is an essential property in real systems, where the control signals are constrained.
\section{Conclusion}
In this paper, we have addressed the problem of tracking of an underactuated surface vessel with only surge force an yaw moment. The proposed controller ensures global asymptotic tracking of the vessel, following a reference trajectory modeled by a virtual vessel. It is also shown that the stability of the system is not affected if the state measurements are replaced by observers. The using of saturated inputs is essential as in real life the actuators have limited output. Simulation results illustrate the performance of the controller.

\begin{figure}[h!]
 \begin{minipage}[b]{0.5\linewidth}
  \centering\epsfig{figure=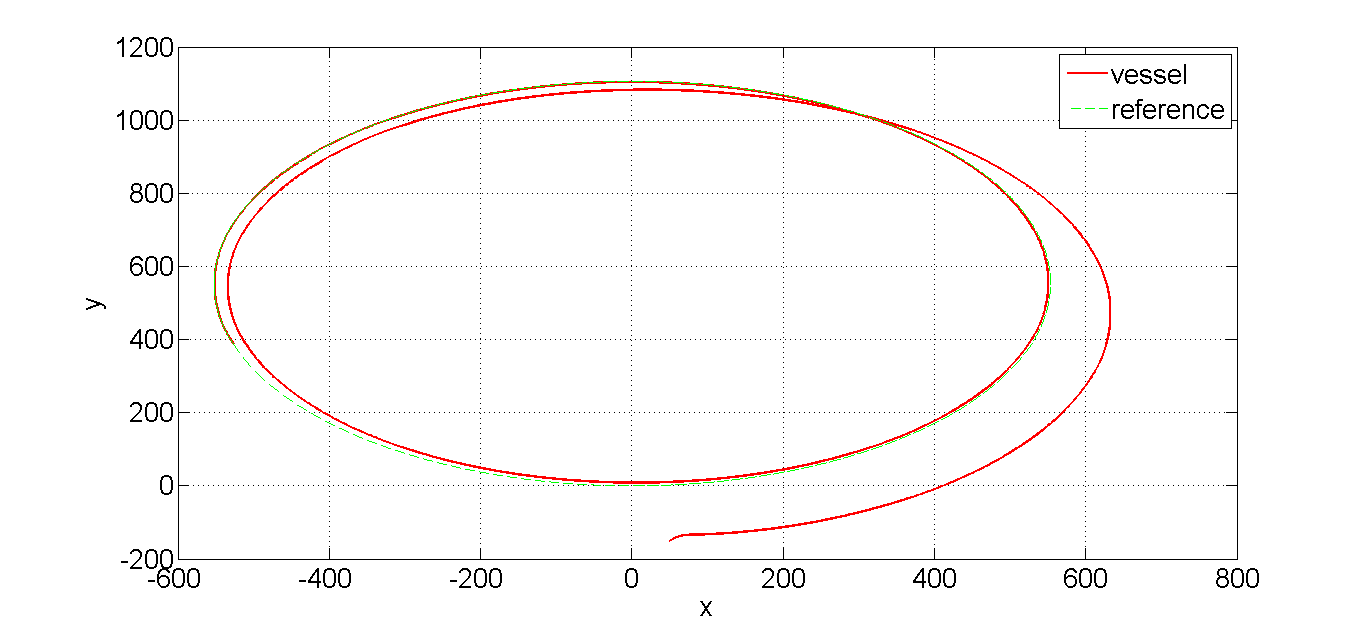,width=\linewidth}
  \caption{Reference trajectory and the vessel  \label{plan}}
 \end{minipage} \hfill
 \begin{minipage}[b]{0.5\linewidth}
  \centering\epsfig{figure=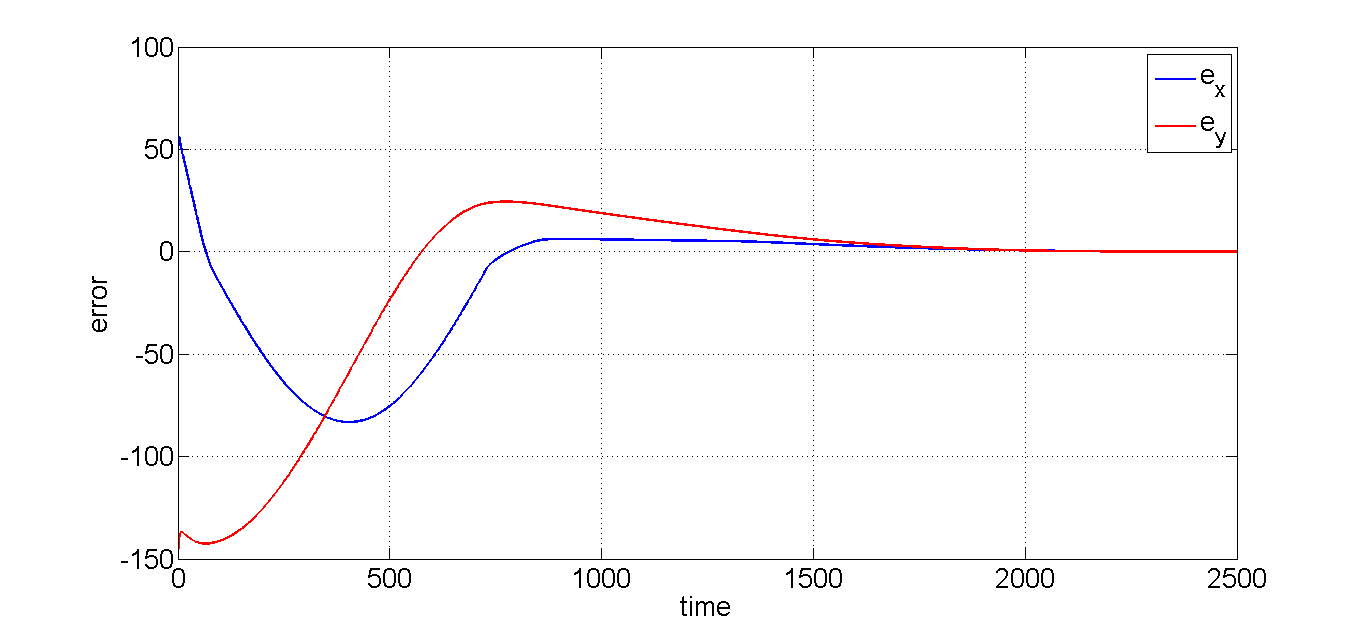,width=\linewidth}
  \caption{Errors $e_x$ and $e_y$  \label{exy}}
 \end{minipage}
\end{figure}

\begin{figure}[h!]
 \begin{minipage}[b]{.5\linewidth}
  \centering\epsfig{figure=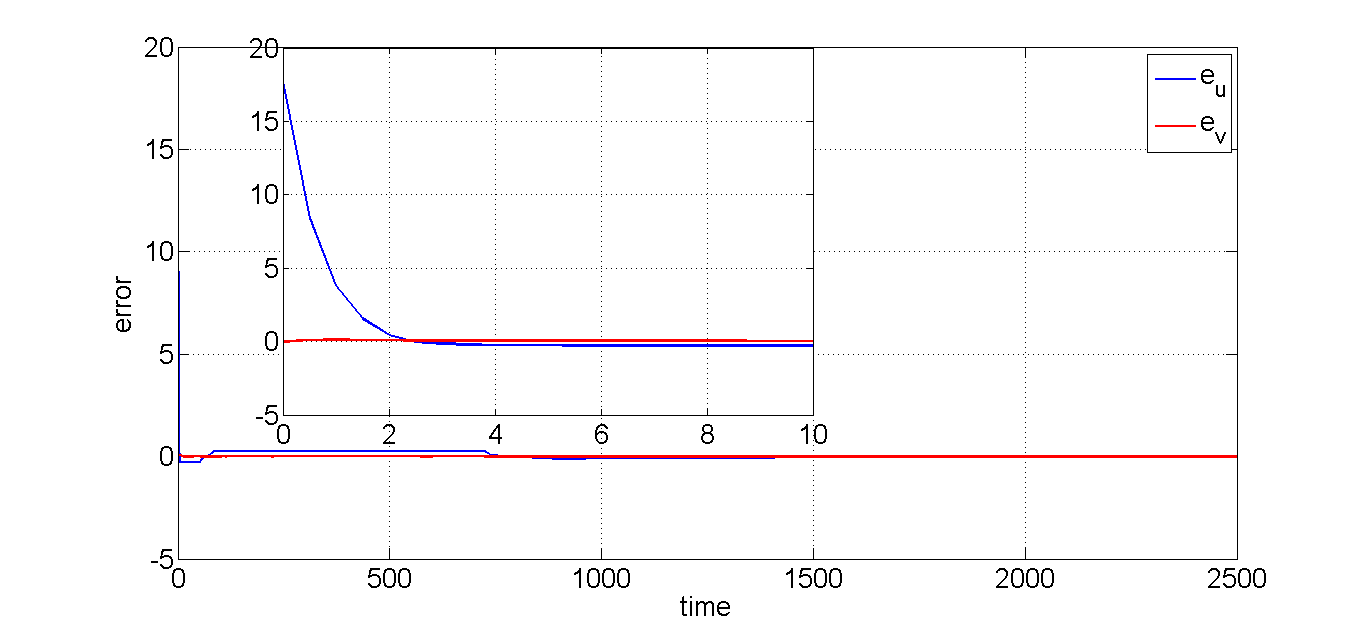,width=\linewidth}
  \caption{Errors $e_u$ and $e_v$  \label{euv}}
 \end{minipage} \hfill
 \begin{minipage}[b]{.5\linewidth}
  \centering\epsfig{figure=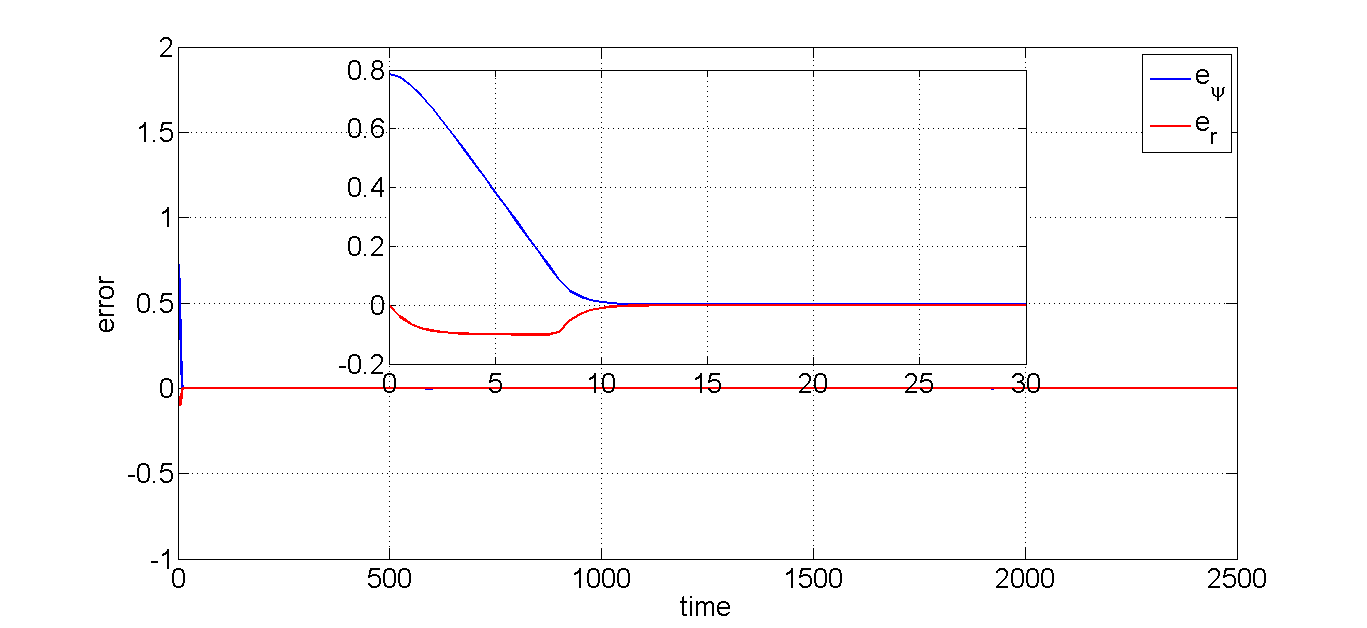,width=\linewidth}
  \caption{Errors $e_\psi$ and $e_r$ \label{epsir}}
 \end{minipage}
\end{figure}

\begin{figure}[h!]
 \begin{minipage}[b]{.5\linewidth}
  \centering\epsfig{figure=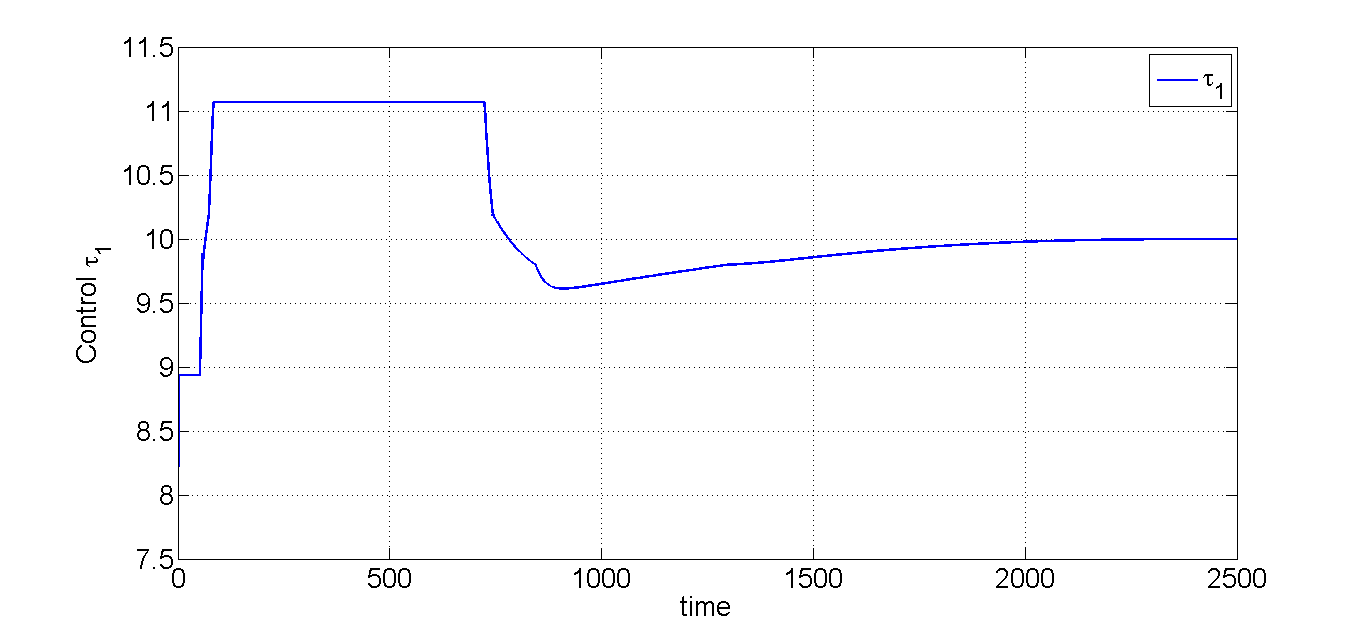,width=\linewidth}
  \caption{Control $\tau_1$ \label{tau1}}
 \end{minipage} \hfill
 \begin{minipage}[b]{.5\linewidth}
  \centering\epsfig{figure=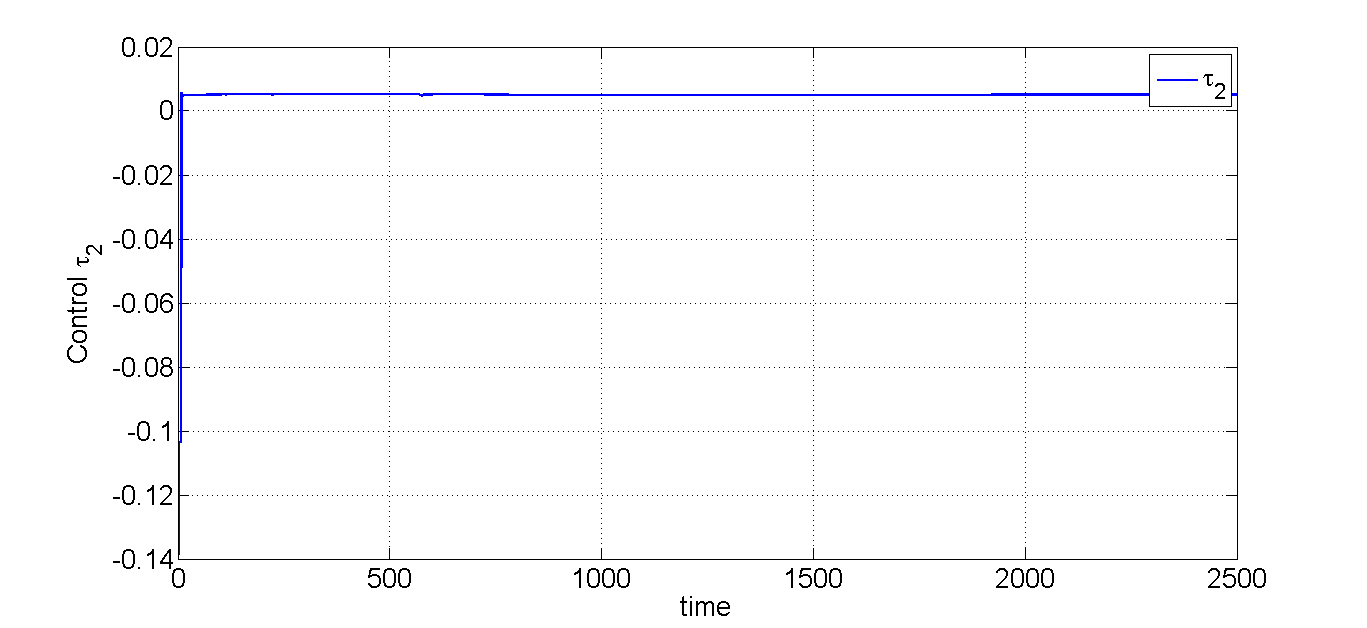,width=\linewidth}
  \caption{Control $\tau_2$ \label{tau2}}
 \end{minipage}
\end{figure}


\pagebreak
\bibliography{Bateau_bib}

\end{document}